\documentclass[12pt,oneside]{amsart}

\usepackage{graphicx}
\usepackage{amsfonts}
\usepackage{epsf}
\usepackage{amssymb}
\usepackage{amsmath}
\usepackage{amscd}
\usepackage{tikz}
\usepackage{pdfpages}
\usepackage{fancyhdr}
\usepackage{setspace}
\usepackage{hyperref}
\usepackage[all]{xy}
\usetikzlibrary{matrix}
\usepackage{verbatim}
\usepackage{enumerate}

\theoremstyle{definition}
\newtheorem{theorem}{Theorem}[section]
\newtheorem{proposition}[theorem]{Proposition}
\newtheorem{lemma}[theorem]{Lemma}
\newtheorem{definition}[theorem]{Definition}
\newtheorem{question}[theorem]{Question}
\newtheorem{corollary}[theorem]{Corollary}

\newtheorem{remark}[theorem]{Remark}

\newtheorem{claim}[theorem]{Claim}
\newtheorem{example}[theorem]{Example}

\newcommand{\Z}{\mathbb{Z}}

\newcommand{\N}{\mathbb{N}}

\newcommand{\R}{\mathbb{R}}

\newcommand{\id}{\text{id}}

\makeatletter
\newtheorem*{rep@theorem}{\rep@title}
\newcommand{\newreptheorem}[2]{%
\newenvironment{rep#1}[1]{%
 \def\rep@title{#2 \ref{##1}}%
 \begin{rep@theorem}}%
 {\end{rep@theorem}}}
\makeatother

\newreptheorem{theorem}{Theorem}
\newreptheorem{lemma}{Lemma}
\newreptheorem{question}{Question}
\newreptheorem{corollary}{Corollary}

\topmargin = -.25in 
\textwidth = 6in
\textheight = 8.75in
\oddsidemargin = .25in
\evensidemargin = 0in

\begin{document}

\rhead{\thepage}
\lhead{\author}
\thispagestyle{empty}


\raggedbottom
\pagenumbering{arabic}
\setcounter{section}{0}


\title{Fibered ribbon disks}
\author{Kyle Larson \and Jeffrey Meier}

\begin{abstract}
We study the relationship between fibered ribbon 1--knots and fibered ribbon 2--knots by studying fibered slice disks with handlebody fibers.  We give a characterization of fibered homotopy-ribbon disks and give analogues of the Stallings twist for fibered disks and 2--knots. As an application, we produce infinite families of distinct homotopy-ribbon disks with homotopy equivalent exteriors, with potential relevance to the Slice-Ribbon Conjecture.
We show that any fibered ribbon 2--knot can be obtained by doubling infinitely many different slice disks (sometimes in different contractible 4--manifolds).  Finally, we illustrate these ideas for the examples arising from spinning fibered 1--knots.

\end{abstract}
\maketitle
\section{Introduction}

A knot $K$ is fibered if its complement fibers over the circle (with the fibration well-behaved near $K$). Fibered knots have a long and rich history of study (for both classical knots and higher-dimensional knots). In the classical case, a theorem of Stallings (\cite{stallings:group}, see also \cite{neuwirth}) states that a knot is fibered if and only if its group has a finitely generated commutator subgroup. Stallings \cite{stallings:constructions} also gave a method to produce new fibered knots from old ones by twisting along a fiber, and Harer \cite{harer} showed that this twisting operation and a type of plumbing is sufficient to generate all fibered knots in $S^3$.

Another special class of knots are slice knots. A knot $K$ in $S^3$ is slice if it bounds a smoothly embedded disk in $B^4$ (and more generally an $n$--knot in $S^{n+2}$ is slice if it bounds a disk in $B^{n+3}$). If $K$ bounds an immersed disk in $S^3$ with only ribbon singularities we say $K$ is a ribbon knot. Every ribbon knot is slice, and the famous Slice-Ribbon Conjecture states that every slice knot in $S^3$ is ribbon. Historically there have been few potential counterexamples due to the difficulty of producing knots that are slice but not obviously ribbon. For recent progress in this direction see \cite{GST} and \cite{AT}. In this paper we study slice disks in $B^4$ whose complements fiber over the circle. The fiber will be a 3--manifold with surface boundary, and the boundary of the slice disk will be a fibered knot in $S^3$.

\renewcommand{\thefootnote}{\fnsymbol{footnote}} 
\footnotetext{\textbf{AMS Subject Classification:} 57M25, 57Q45, 57R65}     
\renewcommand{\thefootnote}{\arabic{footnote}} 

A potentially intermediate class of knots between slice and ribbon are homotopy-ribbon knots, which by definition bound slice disks whose complements admit a handle decomposition without handles of index three or higher. Classical work by Casson-Gordon \cite{casson-gordon:fibered_ribbon} and Cochran \cite{cochran:ribbon_knots} shows that for fibered 1--knots and 2--knots this condition can be characterized in terms of certain properties of the fiber. Furthermore, Casson and Gordon show that a fibered homotopy-ribbon 1--knot $K$ bounds a fibered homotopy-ribbon disk in a homotopy 4--ball. Doubling the disk gives a fibered homotopy-ribbon 2--knot in a homotopy 4--sphere. To understand this relationship better we were motivated to study the intermediary case of fibered homotopy-ribbon disks. Our first result is a characterization of such disks in terms of their fiber.

\begin{theorem}\label{thm:disk_knot}
Let $D$ be a fibered slice disk in $B^4$ with fiber $H$.  Then $D$ is homotopy-ribbon if and only if $H\cong H_g$, the solid genus $g$ handlebody.
\end{theorem}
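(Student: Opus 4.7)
The plan is to reformulate the homotopy-ribbon condition via fundamental groups and then apply a classical $3$-manifold recognition theorem.  By the standard equivalence (see, e.g., \cite{casson-gordon:fibered_ribbon}), the slice disk $D$ is homotopy-ribbon if and only if the inclusion $S^3\setminus\nu(K)\hookrightarrow X:=B^4\setminus\nu(D)$ induces a surjection on fundamental groups.  Since both spaces fiber compatibly over $S^1$, we have $\pi_1(X)\cong\pi_1(H)\rtimes\Z$ and $\pi_1(S^3\setminus\nu(K))\cong\pi_1(\Sigma_g^\circ)\rtimes\Z$, where $\Sigma_g^\circ$ is the fiber Seifert surface; the $\Z$-factors are both generated by a meridian of $K$ and map isomorphically.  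Thus the inclusion is $\pi_1$-surjective iff the fiber-level map $\pi_1(\Sigma_g^\circ)\to\pi_1(H)$ is surjective.  Since $\Sigma_g^\circ$ sits in the closed fiber $\widehat{\Sigma}_g=\partial H$ as the complement of a single open disk, $\pi_1(\Sigma_g^\circ)\twoheadrightarrow\pi_1(\partial H)$, and the condition reduces further to: $\pi_1(\partial H)\to\pi_1(H)$ is surjective.

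The theorem therefore becomes the purely $3$-manifold-theoretic statement that a compact orientable $3$-manifold $H$ with connected boundary is a handlebody iff $\pi_1(\partial H)\to\pi_1(H)$ is surjective.  The $(\Leftarrow)$ direction is immediate: the $g$ longitudes of the $1$-handles of $H_g$ lie on $\partial H_g$ and generate $\pi_1(H_g)=F_g$.  For $(\Rightarrow)$ I would iterate the loop theorem.  If $\pi_1(\partial H)\to\pi_1(H)$ is surjective but not injective, the loop theorem produces an essential embedded compressing disk $\Delta\subset H$; compressing along $\Delta$ yields $H'$ with $H\cong H'\cup(\text{$1$-handle})$ and boundary of strictly smaller total genus, and a short van Kampen argument verifies that $\pi_1$-surjectivity from the new boundary to $H'$ persists.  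The iteration terminates when the remaining boundary components are incompressible; at that stage the boundary-to-interior map is both injective (incompressibility) and surjective (preserved throughout), hence an isomorphism.  However, a compact orientable $3$-manifold with connected positive-genus boundary and $\pi_1(\partial)\to\pi_1$ an isomorphism would be homotopy equivalent to its boundary, forcing $H_3(H,\partial H)=0$ while Lefschetz duality gives $H_3(H,\partial H)\cong\Z$, a contradiction.  Hence every terminal boundary component is a sphere; each terminal piece is then a $3$-ball by the Poincar\'e--Alexander theorem, and reversing the compressions realizes $H$ as a handlebody, with the genus matching by Euler characteristic.

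The main obstacle will be the loop-theoretic iteration---specifically, verifying that $\pi_1$-surjectivity is genuinely preserved under compression, via a case analysis on whether the compressing disk is separating or non-separating.  An alternative route, perhaps slicker in view of the paper's handle-theoretic perspective, is to observe that $\pi_1(\partial H)\to\pi_1(H)$ being surjective allows a relative handle decomposition of $H$ from $\partial H\times I$ using only $2$- and $3$-handles, exhibiting $H$ directly as a compression body with $\partial_-H=\partial H$; connectedness of $\partial H$ then forces $\partial_+H=\emptyset$, and such a compression body is by construction a handlebody.
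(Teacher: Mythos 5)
Your approach is genuinely different from the paper's, and the core 3--manifold strategy (compress along loop-theorem disks and verify the surjection persists) is sound, but there is a real gap at the very first step that you should not brush past. You assert, citing \cite{casson-gordon:fibered_ribbon}, that $D$ is homotopy-ribbon \emph{if and only if} $\pi_1(S^3\setminus\nu K)\to\pi_1(X)$ is surjective. Only the forward implication (homotopy-ribbon $\Rightarrow$ surjection) is standard. The converse is exactly the implication ``weakly homotopy-ribbon $\Rightarrow$ homotopy-ribbon,'' and the paper is explicit that this is \emph{not} known in general; it is one of the open converse inclusions in the chain $ribbon \subseteq homotopy\text{-}ribbon \subseteq weakly\;homotopy\text{-}ribbon \subseteq slice$. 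Your ``$\Leftarrow$'' direction (if $H\cong H_g$ then $D$ is homotopy-ribbon) relies on precisely this non-standard implication: from $H\cong H_g$ you deduce the $\pi_1$-surjection and then stop, which only shows $D$ is weakly homotopy-ribbon. To upgrade to homotopy-ribbon you need the fibered structure: a mapping torus of a handlebody can be built with only $0$-, $1$-, and $2$-handles (Lemma~\ref{lemma:handlebody_bundle}), and that is what the paper uses for this direction. This is an easy fix, but as written the ``standard equivalence'' claim is false and must be replaced by the explicit handle-decomposition argument.

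Beyond that, the route you take for the ``$\Rightarrow$'' direction is genuinely different from the paper's and has some appeal. The paper doubles $D$ to a fibered homotopy-ribbon $2$--knot, applies Cochran's classification (Theorem~\ref{thm:cochran}) to identify $\mathcal{D}H\cong\#_h S^1\times S^2$, then compresses $F=\partial H$ symmetrically in the double and uses the $\pi_1$-surjection on the commutator subgroup level to force $h=g$. You instead work directly with $H$: reduce $\pi_1$-surjectivity of the disk exterior to $\pi_1(\partial H)\twoheadrightarrow\pi_1(H)$ (this reduction via the mapping-torus semidirect product structure and capping off $\Sigma_g^\circ$ is correct), then iterate the Loop Theorem on $H$ itself. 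This bypasses Cochran's theorem entirely, at the cost of needing the $3$--manifold recognition statement ``connected boundary plus $\pi_1(\partial H)\twoheadrightarrow\pi_1(H)$ implies handlebody.'' That statement is true, and your compression sketch is essentially correct, with two caveats worth flagging: the terminal step that a piece with $S^2$ boundary and trivial fundamental group is a ball requires the Poincar\'e conjecture; and your ``alternative route'' via a relative handle decomposition of $H$ with only $2$-- and $3$--handles is not immediate from $\pi_1$-surjectivity alone---cancelling all the relative $1$--handles is an Andrews--Curtis-type issue, so the compression-body argument via the Loop Theorem is the safer one.
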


Next we consider how to produce new fibered disks from old ones, and so prove an analogue of the Stallings twist theorem \cite{stallings:constructions}. The proof continues the broader idea of interpreting certain changes to the monodromy of a mapping torus as surgeries on the total space. The particular surgeries we use for fibered disks double to Gluck twists,  so we see the  phenomenon of an infinite order operation (twisting along a disk) collapse upon doubling to an order two operation (twisting along a sphere). As a result, we can show that many different fibered disks double to the same fibered 2--knot.



\begin{theorem}\label{thm:fibered_disk_knot}
Let $D_0\subset B^4$ be a fibered disk with fiber $H$, and let $E\subset H$ be an essential, properly embedded disk that is unknotted in $B^4$.  Then, changing the monodromy by twisting $m$ times along $E$ gives a new fibered disk $D_m\subset B^4$.  Furthermore, 
\begin{enumerate}
\item the collection $\{D_m\}_{m\in\Z}$ of disks obtained from twisting contains infinitely many pairwise inequivalent elements, and
\item the collection $\{\mathcal DD_m\}_{m\in\Z}$ of 2--knots in $S^4$ obtained by doubling contains at most \emph{two} pairwise inequivalent elements.
\end{enumerate}
Additionally, if the fiber $H$ is a handlebody, then the disk exteriors $\{\overline{B^4\setminus\nu D_m}\}_{m\in\Z}$ are all homotopy equivalent.

\end{theorem}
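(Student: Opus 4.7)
The plan is to realize the monodromy modification as a localized surgery on $X_0:=\overline{B^4\setminus\nu D_0}$ that leaves the ambient $B^4$ unchanged, and then to deduce (1)--(3) from the effect of the surgery on the boundary monodromy, on doubling, and on fundamental groups.

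\textbf{Construction of $D_m$.} Let $\phi_0\in\mathrm{Diff}(H)$ be the monodromy of the fibration $X_0\to S^1$, and let $\tau_E$ denote the Dehn twist of $H$ along $E$ supported in a bicollar $E\times[-1,1]\subset H$. Set $\phi_m:=\phi_0\circ\tau_E^m$ and let $X_m$ be the associated mapping torus. To identify $X_m$ with $\overline{B^4\setminus\nu D_m}$ for a fibered disk $D_m\subset B^4$, I would interpret the passage from $\phi_0$ to $\phi_m$ as a cut-and-reglue of a fiber supported in a $4$-ball neighborhood $(\text{bicollar of }E)\times(\text{short arc in } S^1)$ inside $X_0$. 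Because $E$ is unknotted in $B^4$, this $4$-ball sits standardly in $B^4$, so the surgery leaves the ambient $B^4$ unchanged; the new fibration structure then defines a fibered slice disk $D_m\subset B^4$ with monodromy $\phi_m$.

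\textbf{Proof of (1).} The restriction $\phi_m|_{\partial H}=(\phi_0|_{\partial H})\circ\tau_{\partial E}^m$ is the monodromy of the fibered $1$--knot $\partial D_m\subset S^3$. Since $E$ is essential in $H$, $\partial E$ is an essential simple closed curve in $\partial H$, and $\tau_{\partial E}$ has infinite order in $\mathrm{Mod}(\partial H)$. A standard mapping class group argument---using, e.g., the action on $H_1(\partial H)$, or dilatations if $\phi_0|_{\partial H}$ is pseudo-Anosov---shows that the elements $\phi_m|_{\partial H}$ fall into infinitely many conjugacy classes in $\mathrm{Mod}(\partial H)$. Since fibered $1$--knots in $S^3$ are classified up to isotopy by the conjugacy class of their monodromy, infinitely many of the $\partial D_m$ are pairwise non-isotopic, and therefore so are the disks $D_m$.

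\textbf{Proof of (2).} Doubling $(B^4,D_m)$ yields $(S^4,\mathcal{D}D_m)$, whose complement fibers over $S^1$ with fiber the doubled $3$--manifold $\mathcal{D}H$ and monodromy $\mathcal{D}\phi_m$. Under doubling, the properly embedded disk $E\subset H$ closes up to an unknotted $2$--sphere $\mathcal{D}E\subset\mathcal{D}H$, and the Dehn twist $\tau_E$ doubles to the sphere twist $\tau_{\mathcal{D}E}$; equivalently, the local disk twist of $B^4$ along $E$ doubles to the Gluck twist of $S^4$ along $\mathcal{D}E$. Sphere twists (equivalently, Gluck twists along $\mathcal{D}E$) have order two---a manifestation of $\pi_1(SO(3))=\Z/2$---so $\mathcal{D}\phi_m$, and hence $\mathcal{D}D_m$, depends only on the parity of $m$. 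Thus $\{\mathcal{D}D_m\}_{m\in\Z}$ contains at most two distinct $2$--knots.

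\textbf{Proof of (3).} Suppose $H=H_g$, so $\pi_1(H)=F_g$ and $H$ is a $K(F_g,1)$. Because $\partial E$ bounds the disk $E\subset H$, a direct computation shows that the Dehn twist $\tau_E$ acts trivially on $\pi_1(H)$: each crossing of a loop $\gamma$ with $E$ contributes, under $\tau_E$, a conjugate of $[\partial E]=1\in\pi_1(H)$. Hence $(\phi_m)_*=(\phi_0)_*$ as automorphisms of $F_g$, and each $\pi_1(X_m)$ is the same HNN extension $F_g\rtimes_{(\phi_0)_*}\Z$. Since $X_m$ is the mapping torus of a $\pi_1$--automorphism of an aspherical space, $X_m$ is itself aspherical. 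All the $X_m$ are therefore $K(\pi,1)$'s for a common group $\pi$, hence pairwise homotopy equivalent.

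\textbf{Main obstacle.} The key technical step is the construction: establishing that the $m$--fold twist of the monodromy along $E$ really can be realized by a cut-and-reglue inside a standard $4$--ball neighborhood of the unknotted $E$ in $B^4$, so that the ambient $B^4$ is preserved and the resulting fibration structure defines a genuine fibered slice disk $D_m$. Once this surgery interpretation is in place, (1)--(3) follow from---respectively---a mapping class argument on $\partial H$, the doubling correspondence between disk twists and Gluck twists together with the latter's order-two property, and the asphericity of handlebodies.
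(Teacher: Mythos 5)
Your overall strategy tracks the paper quite closely: interpret the monodromy change as a localized surgery supported near $E$ (the paper formalizes this as a $2$--handle surgery via Proposition \ref{prop:disk_twist} together with Lemma \ref{lemma:unknotted_cocore}, so your ``cut-and-reglue in a $4$--ball that is standard because $E$ is unknotted'' is the right idea but should be made precise: what matters is that the unknotted cocore gives a cancelling $1$--$2$ handle pair independent of framing, not merely that a $4$--ball sits standardly). Parts (2) and (3) are essentially correct and match the paper's reasoning: doubling the disk twist gives a sphere twist realized as a Gluck twist of order two, and for $H=H_g$ the exteriors are aspherical with $\pi_1$ independent of $m$ (your observation that $(\tau_E)_*$ is trivial on $\pi_1(H_g)$ because $\partial E$ bounds is a clean way to see this, equivalent in effect to the paper's Lemma \ref{lemma:invariants}).

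The genuine gap is in your proof of (1). You assert that a ``standard mapping class group argument'' shows the monodromies $\varphi_m=\varphi_0\circ\tau_c^m$ fall into infinitely many conjugacy classes, suggesting the action on $H_1(\partial H)$ or dilatations. Neither route works in general. The homology action gives $(\tau_c^m)_*(x)=x+m\langle x,c\rangle c$, and the only accessible conjugacy invariant, $\mathrm{tr}((\varphi_m)_*)=\mathrm{tr}((\varphi_0)_*)+m\langle (\varphi_0)_*c,c\rangle$, is constant in $m$ whenever $\langle (\varphi_0)_*c,c\rangle=0$ --- which happens, for instance, whenever the monodromy preserves (the isotopy or homology class of) the Stallings curve, as it does for the motivating half-spun examples. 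The dilatation argument requires $\varphi_0|_{\partial H}$ (and all the $\varphi_m$) to be pseudo-Anosov, which is neither hypothesized nor true in the cases of interest (e.g.\ $K\#(-K)$ has reducible monodromy). The paper instead observes that if $K_m=K_0$ then $\tau_c^m$ would be a \emph{commutator} in the mapping class group, and then invokes bounds on (stable) commutator length of powers of Dehn twists due to Korkmaz--Ozbagci and Korkmaz (Lemma \ref{lemma:commutator}, requiring $g\geq 2$) to rule this out for $|m|=1$ or $|m|>9g-3$, and extracts an infinite subfamily of distinct knots. This is a nontrivial input, not a routine mapping-class argument, and your proof as written does not establish (1).

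A secondary point: you should be explicit that the boundary curve $c=\partial E$ is genuinely a Stallings curve for $\partial D_0$ (it bounds a disk $E'$ isotoped into $S^3$ and has zero surface framing since it bounds in $H$); this is what makes the restriction to the boundary an honest Stallings twist, which is the bridge to the commutator-length argument.
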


If we start with a fibered ribbon disk $D_0$ for a ribbon knot $\partial D_0$, we can twist $m$ times along an unknotted disk $E$ in the fiber to obtain $D_m$, which will be a \emph{homotopy}-ribbon disk for $\partial D_m$, by Theorem \ref{thm:disk_knot}. However, it's not obvious that $D_m$ must be ribbon. Therefore, we see that the above procedure could, in principle, be used to produce potential counterexamples to the Slice-Ribbon Conjecture.

In the previous theorem we saw that it was possible for a fibered 2--knot to be obtained as the double of infinitely many different fibered disks (in a fiber-preserving way). In the next theorem we show that this is always the case for fibered homotopy-ribbon 2--knots, with the caveat that we cannot guarantee that the disks lie in $B^4$.

\begin{theorem}\label{thm:halving}
Let $\mathcal S$ be a non-trivial fibered homotopy-ribbon 2--knot in $S^4$. Then $(S^4, \mathcal S)$ can be expressed as the double of infinitely many pairs $(W_m, D_m)$, where $D_m$ is a fibered homotopy-ribbon disk in a contractible manifold $W_m$.  Furthermore, infinitely many of the $W_m$ are pairwise non-diffeomorphic. 
\end{theorem}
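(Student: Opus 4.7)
The plan is to produce a single halving $(S^4,\mathcal S) = (W_0,D_0) \cup_{(S^3,K_0)} (W_0,D_0)$ by bisecting the fiber of $\mathcal S$ along a Heegaard surface compatible with the monodromy, then to deform this halving via the monodromy-twist operation driving Theorem~\ref{thm:fibered_disk_knot} to generate an infinite family of halvings $(W_m,D_m)$, and finally to argue that infinitely many of the $W_m$ are pairwise non-diffeomorphic.  The last assertion will be the main obstacle.

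For the single halving, I would invoke the $2$-knot analog of Cochran's characterization to identify the closed fiber $\hat Y$ of $S^4\setminus\nu\mathcal S$ as $\#^g(S^1\times S^2)$ for some $g\ge 1$, equipped with its standard genus-$g$ Heegaard splitting $\hat Y = H_g \cup_{\Sigma_g} H_g$.  Using the standard description of the mapping class group of $\#^g(S^1\times S^2)$ (rel.\ a meridional ball), the monodromy $\phi$ is isotopic to one preserving both this splitting and each handlebody piece; the surface $\Sigma_g$ then sweeps around $S^1$ to a properly embedded 3-sphere $\Sigma \subset S^4$ meeting $\mathcal S$ transversely in a 1-knot $K_0$.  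The two sides of $\Sigma$ are diffeomorphic pairs $(W_0,D_0)$ whose double is $(S^4,\mathcal S)$, and $W_0\setminus\nu D_0$ fibers over $S^1$ with fiber $H_g$.  Theorem~\ref{thm:disk_knot} then certifies $D_0$ as a fibered homotopy-ribbon disk, and a Mayer--Vietoris computation applied to $S^4 = W_0 \cup_{S^3} W_0$ forces $W_0$ to be contractible with $\partial W_0 = S^3$.

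Next, I would apply the twisting construction from the proof of Theorem~\ref{thm:fibered_disk_knot} to $(W_0,D_0)$: pick an essential, properly embedded disk $E\subset H_g$, and for each $m\in\Z$ let $D_m$ be the fibered disk obtained by composing the monodromy of $D_0$ with $\tau_E^m$.  The same local surgery description produces a new pair $(W_m,D_m)$; the fiber remains $H_g$, so Theorem~\ref{thm:disk_knot} keeps $D_m$ homotopy-ribbon, and the analogous Mayer--Vietoris argument keeps $W_m$ contractible.  The sphere-Gluck computation driving Theorem~\ref{thm:fibered_disk_knot}(2) identifies $\mathcal{D}D_m$ as either $\mathcal S$ or its Gluck twist $\mathcal S^G$, and since the Gluck twist has order two, restricting to even $m$ yields an infinite subfamily $\{(W_{2k},D_{2k})\}_{k\in\Z}$ of honest halvings of $(S^4,\mathcal S)$.

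The hard part is showing that infinitely many of the $W_m$ are pairwise non-diffeomorphic.  Unlike Theorem~\ref{thm:fibered_disk_knot}, where the hypothesis that $E$ be unknotted in $B^4$ exactly ensured $W_m = B^4$, here one deliberately chooses $E\subset H_g$ so that $\partial E\subset\partial H_g$ sweeps out an essential torus $T_E\subset W_0\setminus\nu D_0$, and then the move $W_0\rightsquigarrow W_m$ is realized as a multiplicity-$m$ log transform on $T_E$.  To distinguish infinitely many of the resulting contractible pieces I would search for a smooth invariant sensitive to this log transform --- for instance Casson-type data on an auxiliary homology sphere built from $T_E$, or Seiberg--Witten/Donaldson calculations on a closed $4$-manifold obtained canonically from $(W_m,D_m)$ (for example by capping $\partial W_m = S^3$ with a $B^4$ and performing a fixed universal modification).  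Verifying that such an invariant actually varies with $m$ in infinitely many different ways, and so detects the smooth structure on $W_m$, is the technical heart of the argument.
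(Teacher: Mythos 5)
Your overall plan — build one fibered halving, then deform it, then distinguish the pieces — is sound in outline, but it diverges from the paper's proof in the construction step and contains a crucial error that leads you to the wrong distinguishing strategy.

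The paper does \emph{not} twist the monodromy by $\tau_E^m$ for some fiber disk $E$ (that is the operation of Theorem~\ref{thm:fibered_disk_knot}). Instead, it keeps the handlebody monodromy $\phi$ fixed and varies the \emph{framing} of the 2--handle $h$ whose cocore is the slice disk itself: $W_m = (H_g\times_\phi S^1)\cup_m h$, so all the $D_m$ have literally the same exterior $H_g\times_\phi S^1$ and the only thing changing is the gluing slope. This is a cleaner family: by Lemma~\ref{lemma:framing} the doubles all coincide (up to Gluck twist, which by Remark~\ref{rmk:trivial_gluck} is invisible on a homotopy-ribbon $2$--knot — so one need not restrict to even $m$ as you do), and by Example~\ref{ex:Dehn_surgery} the boundaries satisfy $Y_m=\partial W_m=Y(-1/m)$ for a \emph{fixed} compact $3$--manifold $Y$ with torus boundary.

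The serious gap in your proposal is the assertion that $\partial W_0=S^3$ and hence that $S^4=W_0\cup_{S^3}W_0$. This is generically false: $\partial W_m$ is a $\mathbb{Z}$--homology $3$--sphere but need not be $S^3$ — indeed, if it were always $S^3$ the $W_m$ would be candidate exotic $4$--balls and distinguishing them would be out of reach (as you yourself note in your last paragraph, you cannot close that gap). The paper sidesteps exactly this difficulty: because the $Y_m$ are $(-1/m)$--Dehn fillings of a fixed knot exterior in a homology sphere, they can be distinguished by $3$--manifold techniques — new exceptional Seifert fibers in the Seifert-fibered case, strictly increasing hyperbolic volume via Thurston's Dehn surgery theorem in the hyperbolic case, and passing to a geometric piece in the toroidal case. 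No gauge theory is required. Your proposal to reach for Seiberg--Witten or Donaldson invariants of closed-up $W_m$'s is a consequence of the $\partial W_m=S^3$ mistake, and it is exactly the hard step you flag as unresolved.

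Two smaller points: your ``sweeping'' surface is not a $3$--sphere but $\Sigma_g\times_{\widehat\varphi}S^1$ surgered along a fiber circle, which is precisely $Y_0$; and twisting along the fiber disk $E$ corresponds to $2$--handle surgery (Proposition~\ref{prop:disk_twist}), not to a log transform on the torus $\partial E\times S^1$ — those are distinct operations with different gluing regions, and conflating them further obscures what $\partial W_m$ actually is.
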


Given a 2--knot $\mathcal S\subset S^4$, we call a 1--knot $K$ a \emph{symmetric equator} of $\mathcal S$ if $\mathcal S$ is the double of a disk along $K$ (in some contractible 4--manifold).  We have the following immediate corollary to Theorem \ref{thm:halving}.

\begin{corollary}\label{coro:symmetric_equators}
Any non-trivial, fibered homotopy-ribbon 2--knot has infinitely many distinct fibered symmetric equators.
\end{corollary}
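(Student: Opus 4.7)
My plan is to derive the corollary as an essentially immediate consequence of Theorem \ref{thm:halving}. Let $\mathcal{S} \subset S^4$ be a non-trivial, fibered homotopy-ribbon 2--knot. Applying Theorem \ref{thm:halving} yields an infinite family of pairs $(W_m, D_m)$ with $(S^4, \mathcal{S}) = \mathcal{D}(W_m, D_m)$, where each $D_m$ is a fibered homotopy-ribbon disk in a contractible 4--manifold $W_m$, and where infinitely many of the $W_m$ are pairwise non-diffeomorphic. Set $K_m := \partial D_m \subset \partial W_m$. Since $D_m$ is fibered, the fibration of the disk exterior $W_m \setminus \nu D_m$ restricts to a fibration of the knot exterior in $\partial W_m$, so each $K_m$ is a fibered 1--knot. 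By the definition preceding the corollary, each $K_m$ is then a fibered symmetric equator of $\mathcal{S}$.

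It remains to argue that the resulting collection $\{K_m\}$ contains infinitely many pairwise distinct 1--knots, in the sense that the pairs $(\partial W_m, K_m)$ are pairwise non-homeomorphic for infinitely many $m$. The key observation is that any pair-homeomorphism $(\partial W_m, K_m) \cong (\partial W_n, K_n)$ forces the ambient 3--manifolds $\partial W_m$ and $\partial W_n$ to be homeomorphic. Assuming that the non-diffeomorphism among infinitely many of the $W_m$ is detected at the level of their boundary 3--manifolds --- as will follow from the construction used to prove Theorem \ref{thm:halving} --- infinitely many of the $\partial W_m$ are pairwise non-homeomorphic, and therefore infinitely many of the $K_m$ are pairwise inequivalent as symmetric equators.

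The main (and really the only) potential obstacle is confirming that the non-diffeomorphism of infinitely many $W_m$ is witnessed at the level of their boundary homology 3--spheres, rather than via some subtler invariant of contractible 4--manifolds with common boundary. In the twisting construction underlying Theorem \ref{thm:halving}, the boundary topology changes directly with the twisting parameter, so this point is automatic and the corollary follows at once.
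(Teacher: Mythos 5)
Your argument is correct and matches the paper's reasoning: the corollary is presented as an immediate consequence of Theorem \ref{thm:halving}, and your reading is the intended one. The one hedge you flag --- whether the non-diffeomorphism of the $W_m$ is witnessed at the level of their boundaries --- is resolved by the detailed statement of Theorem \ref{thm:halving} given in Section \ref{section:results}, where item (4) explicitly asserts that infinitely many of the $Y_m = \partial W_m$ are pairwise non-diffeomorphic (indeed this is what is proved via the volume/Seifert-fibered argument, and the non-diffeomorphism of the $W_m$ is deduced from it, not the other way around). Since a symmetric equator is a knot in a homology 3--sphere considered as a pair, distinct ambient 3--manifolds immediately give distinct equators, exactly as you say.
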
 

The techniques in this paper can be illustrated by considering the classical construction of spinning a fibered 1--knot. Spins of fibered knots provide examples to which Theorems \ref{thm:fibered_disk_knot} and \ref{thm:halving} can be applied in a very nice way.  
For example, the collection of 2--knots produced by Theorem \ref{thm:fibered_disk_knot} contains only one isotopy class if $D_0$ is a half-spun disk (see Section \ref{section:spinning}).






Finally, we show that spins of fibered 1--knots are related in an interesting way.

\begin{theorem}\label{thm:torus_surgery}
Suppose $\mathcal S_1$ and $\mathcal S_2$ are 2--knots in $S^4$ obtained by spinning fibered 1--knots of genus $g_1$ and $g_2$, respectively. If $g_1 = g_2$, then $(S^4, \mathcal S_2)$ can be obtained from $(S^4, \mathcal S_1)$ by surgery on a link of tori in the exterior of $\mathcal S_1$. If $g_1 \neq g_2$, we can get from $(S^4, \mathcal S_1)$ to $(S^4, \mathcal S_2)$ by a sequence of \emph{two} sets of surgeries on links of tori.
\end{theorem}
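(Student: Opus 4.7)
The plan is to exploit the fibered structure of spun $2$-knots and translate changes of monodromy into torus surgeries on the knot exterior. For a fibered $1$-knot $K$ of genus $g$ with monodromy $\phi_K\colon\Sigma_{g,1}\to\Sigma_{g,1}$, the spun knot $\mathcal{S}(K)\subset S^4$ is fibered, and its exterior is a mapping torus of a canonically-determined diffeomorphism $\widehat{\phi}_K$ of a $3$-dimensional fiber $Y_g$ that depends only on $g$ (built from $\Sigma_{g,1}\times I$ with a handlebody structure). Hence exteriors of spins of equal-genus fibered $1$-knots are mapping tori on the same fiber, differing only in their monodromies.

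For Case 1 ($g_1=g_2=g$), I would express the monodromy difference $\widehat{\phi}_{K_2}\widehat{\phi}_{K_1}^{-1}$ as a product $\tau_{E_1}^{\pm 1}\cdots\tau_{E_n}^{\pm 1}$ of Dehn twists along essential, properly embedded disks $E_i\subset Y_g$. This reduction uses standard generating sets for the handlebody mapping class group (rel boundary), together with the observation that spin monodromies lie in a subgroup generated by disk twists. For each factor $\tau_{E_i}$, the mechanism from the proof of Theorem~\ref{thm:fibered_disk_knot}---where twisting the monodromy by $\tau_{E_i}$ corresponds to cutting along a neighborhood of $E_i\times S^1$ in the mapping torus and regluing---realizes this monodromy change as a Dehn surgery on the torus $T_i:=\partial E_i\times S^1$. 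Composing these surgeries on the link $\bigsqcup T_i\subset S^4\setminus\nu\mathcal{S}_1$ then yields $(S^4,\mathcal{S}_2)$.

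For Case 2 ($g_1\neq g_2$), without loss of generality $g_1<g_2$. I would first apply one set of torus surgeries to pass from $\mathcal{S}_1$ to an intermediate fibered $2$-knot $\mathcal{S}'$ whose fiber has genus $g_2$---for example, $\mathcal{S}'$ could be taken to be the spin of a boundary connect sum of $K_1$ with an auxiliary genus-$(g_2-g_1)$ fibered knot. The first set of surgeries realizes the genus-raising \emph{stabilization} of the fiber, implemented by surgeries on tori naturally arising from the plumbing/connect-sum description of the enlarged fiber. A second set of surgeries, supplied by Case 1 applied to $\mathcal{S}'$ and $\mathcal{S}_2$, then completes the argument.

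The main obstacle is the stabilization step of Case 2: identifying the correct tori whose surgery attaches a handle to the fiber. The key idea is that such a modification of the mapping torus should correspond to cutting out a neighborhood $T^2\times D^2$ of a suitably-chosen torus and regluing so as to effect a $1$-handle attachment on the fiber. A secondary subtlety in Case 1 is ensuring that the subgroup of $\mathrm{MCG}(Y_g)$ generated by spin monodromies is contained in the subgroup generated by disk twists; this should follow from a detailed analysis of how the surface monodromy $\phi_K$ extends under spinning, combined with known generating sets for the mapping class groups involved.
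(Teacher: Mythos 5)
Your Case 1 argument has a fundamental problem: twisting the monodromy along a properly embedded \emph{disk} $E_i$ in the fiber does \emph{not} correspond to a torus surgery. By Propositions \ref{prop:disk_twist} and \ref{prop:sphere_twist}, disk twists correspond to 2--handle surgeries on the disk-knot exterior, and upon doubling they become Gluck twists (sphere surgeries), not torus surgeries. The torus $\partial E_i\times S^1$ you name is not the locus being cut and reglued; the operation excises a $B^4$-neighborhood $E_i\times I\times I_0$, not a $T^2\times D^2$. Moreover, the claim that ``spin monodromies lie in a subgroup generated by disk twists'' is false: disk twists generate the twist subgroup of $\mathrm{MCG}(H_g)$, which is precisely the kernel of the action on $\pi_1(H_g)\cong F_g$ (Luft), whereas the spin monodromy $\varphi\times\mathrm{id}$ on $H_{2g}\cong\Sigma_g^\circ\times I$ acts on $\pi_1$ as $\varphi_*$, which is nontrivial for any nontrivial fibered $K$. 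So the desired factorization into disk twists does not exist, independently of the surgery-type issue.

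The correct mechanism, which the paper uses, is one level up: write the \emph{surface} monodromy $\varphi$ as a product of Dehn twists along curves $\alpha_i\subset\Sigma_g^\circ$. These extend across $H_{2g}\cong\Sigma_g^\circ\times I$ as twists along the properly embedded \emph{annuli} $\alpha_i\times I$ (not disks), and upon doubling become twists along the tori $\alpha_i\times S^1\subset M_{2g}$. By Proposition \ref{prop:torus_twist}, each such torus twist is realized by a torus surgery on $\alpha_i\times S^1$ in the total space. Placing the tori coming from $K_1$ (undoing $\Phi_1$) and from $K_2$ (applying $\Phi_2$) on disjoint fibers yields a link of tori whose surgeries carry $(S^4,\mathcal S_1)$ to $(S^4,\mathcal S_2)$. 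Your Case 2 outline (first stabilize the fiber by torus surgeries, then invoke Case 1) is the right shape, though it needs to be made concrete; the paper does this by locating an unlink of tori $U\times S^1$ inside a fixed $B^3\times S^1\subset E_{\mathcal S_1}$ and performing 0--framed surgery times identity to convert $B^3\times S^1$ into $M^\circ_{2(g_2-g_1)}\times S^1$. As written, however, your Case 1 would not go through.
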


\subsection{Organization}\ 

In Section \ref{section:preliminaries}, we set up basic notation and conventions and introduce the main objects of study.  In Section \ref{section:homotopy}, we describe how the work of \cite{casson-gordon:fibered_ribbon} allows us to pass from fibered homotopy-ribbon 1--knots to 2--disk-knots, and then 2--knots upon doubling.  We prove Theorem \ref{thm:disk_knot} using the characterization of fibered homotopy-ribbon 2--knots in \cite{cochran:ribbon_knots}. 

A main theme of the paper is to interpret changes to the monodromy of fibrations as surgeries on the total space, as has classically been done with the Stallings twist.  In Section \ref{section:mono}, we explore this theme in depth, and develop some lemmata required for our main results.  In Section \ref{section:results}, we prove and discuss Theorems \ref{thm:fibered_disk_knot} and \ref{thm:halving}, and raise a number of interesting questions.   To illustrate the techniques and results found throughout the paper, we conclude with a discussion of spinning fibered 1--knots in Section \ref{section:spinning}, where we prove Theorem \ref{thm:torus_surgery}. 

\subsection{Acknowledgments}\ 

The authors would like to thank Robert Gompf, Cameron Gordon, and Chuck Livingston for helpful comments and conversations.  The authors were supported by NSF grants DMS-1148490, DMS-1309021, and DMS-1400543. Additionally, the authors would like to thank the referees for helpful comments.

\section{Preliminaries and Notation}\label{section:preliminaries}

All manifolds will be assumed to be oriented and smooth, and all maps will be smooth. The boundary of a manifold $X$ will be denoted $\partial X$. If $X$ is closed, we will denote by $X^\circ$ the manifold obtained by puncturing $X$ (that is, removing the interior of a closed ball from $X$). The \emph{double} of $X$ is the manifold $\mathcal{D}X = X \cup_{\partial X}(-X)$, where the gluing is done by the identity map. Note that we also have $\mathcal{D}X \cong \partial (X \times I)$. Similarly, the \emph{spin} of a closed manifold $X$ is $\mathcal S(X) = \partial(X^\circ\times D^2)$. We will denote the closed tubular neighborhood of an embedded submanifold $N$ of $X$ by $\nu N$.

An $n$--\emph{knot} $K$ is an embedded copy of $S^n$ in $S^{n+2}$. We say that $K$ is \emph{unknotted} if it bounds an embedded $D^{n+1}$ in $S^{n+2}$. The \emph{exterior} of $K$ is $E_K = \overline{S^{n+2} \setminus\nu K}$. An $n$--\emph{disk-knot} $D$ is a proper embedding of the pair $(D^n, \partial D^n)$ into $(D^{n+2}, \partial D^{n+2})$ (we will sometimes refer to these as just \emph{disks}). Observe that $\partial D$ is an $(n-1)$--knot in $\partial D^{n+2} = S^{n+1}$. We say $D$ is \emph{unknotted} if there is an isotopy fixing $\partial D$ that takes $D$ to an embedded disk in $\partial D^{n+2}$ (in particular, this implies that $\partial D$ is unknotted as well). Knots occurring as boundaries of disk-knots are called \emph{slice} knots, and the disk the knot bounds is a called a \emph{slice disk}. Embedded knots and disk-knots are considered up to the equivalence of pairwise diffeomorphism. 

Throughout, we will let $\Sigma_g$ denote the genus $g$ surface, $H_g=\natural_gS^1\times D^2$ denote the genus $g$ handlebody, and $M_g$ denote $\#_gS^1\times S^2$.

Let $Y$ be a compact and connected $n$--manifold with (possibly empty) connected boundary $\partial Y$, and let $\phi : Y \rightarrow Y$ be a diffeomorphism. The \emph{mapping torus} $Y \times_\phi S^1$ of $Y$ is the $(n+1)$--manifold formed from $Y \times I $ by identifying $ y \times \{1\}$ with $ \phi(y) \times \{0\}$ for all $y \in Y$. We see that a mapping torus is a  fiber bundle over $S^1$ with fiber $Y$. The map $\phi$ is called the \emph{monodromy} of the mapping torus. If $\partial Y$ is non-empty, then $\partial (Y \times_\phi S^1)$ is a mapping torus with fiber $\partial Y$ and monodromy the restriction $\phi\vert_{\partial Y}$. We are especially interested in the case where the exterior of a knot or disk admits such a fibration,  so we highlight the following definition.

\begin{definition}
We say an $n$--knot $K$ is \emph{fibered} if $E_K$ has the structure of a mapping torus with the additional condition that the boundary of the mapping torus is identified with $\partial (\nu K) = K \times \partial D^2$ such that the fibration map is projection onto the second factor. An $n$--disk-knot $D$ is \emph{fibered} if $\overline{D^n \setminus \nu D}$ has the structure of a mapping torus (again, trivial on $\partial( \nu D)$). In this case, we see that $\partial D$ is a fibered knot.
\end{definition}

\begin{remark}
While a fibered $n$--knot will have a \emph{punctured} $(n+1)$--manifold as a fiber, we can fill in the punctures with a copy of $S^1 \times D^{n+1}$ to get a mapping torus without boundary. This closed mapping torus can be obtained by surgering the $n$--knot rather than removing it. Therefore, in what follows it may be convenient to switch between these two set-ups.
\end{remark}

More generally we will say that a disk or sphere embedded in an \emph{arbitrary} manifold is fibered if its complement admits the above structure. For example, in this paper we will consider fibered knots in homology 3--spheres and fibered disk-knots in contractible 4--manifolds.  The above set-up generalizes easily to these settings.

\begin{example}\label{ex:Stallings}
Let $K$ be a fibered knot in $S^3$, so $E_K$ admits the structure of a mapping torus $\Sigma_g^\circ \times_\varphi S^1$. The boundary of $\Sigma_g^\circ$ is a longitude of $K$,  so performing 0--surgery on $K$ glues in a disk to each longitude, resulting in a closed surface bundle $S^3_0 (K) = \Sigma_g \times_{\widehat{\varphi}} S^1$. The monodromy $\widehat{\varphi}$ is obtained by extending $\varphi$ across the capped off surface $\Sigma_g$. The simplest example is when $K$ is the unknot, in which case the fibers are disks, and 0--surgery results in $S^2 \times S^1$. 

For an arbitrary fibered knot $K\subset S^3$, performing $(1/n)$--surgery on $K$ results in a homology 3--sphere $S^3_{1/n}(K)$. Let $K'$ be the core of the glued-in solid torus, also called the \emph{dual knot} of the surgery. Since $K'$ is the core of the surgery torus, we see that $E_{K'} = E_K$, and so $K'$ is a fibered knot in $S^3_{1/n}(K)$. Note that the boundary compatibility condition is still satisfied because $K'$ and $K$ share the same longitudes in their shared exterior.
\end{example}




We conclude this section by examining a concept that will be central throughout the paper: the relationship between fibered disk-knots and fibered 2--knots.  Let $H$ be a compact 3--manifold with $\partial H\cong \Sigma_g$.   Let $\phi:H\to H$ be a diffeomorphism, and consider the mapping torus $X_0=H\times_\phi S^1$.  We can isotope $\phi$ so that it fixes a small disk $D^2\subset \partial H$.  This gives us an embedded solid torus $D^2\times S^1$ in $\partial X_0$, and a fixed fibering of $\partial D^2\times S^1$ by a preferred longitude $\lambda=\{pt\}\times S^1$.  See Figure \ref{fig:monodromy_ribbon}(a).

Consider the 4--manifold $X=X_0\cup_fh$ obtained by attaching 4--dimensional 2--handle $h$ along $D^2\times S^1$.  We say that $h$ has framing $k$ if the framing is related to the one induced by the product structure on the $D^2\times S^1$ by taking $k$ full right-handed twists (for negative $k$ take left-handed twists).  Observe that the cocore $D$ of $h$ is a fibered disk in $X$, since removing a neighborhood of $D$ is equivalent to removing $h$, and results in the fibered manifold $X_0$.

If we double $D$ we get a 2--knot $\mathcal S=\mathcal DD\subset \mathcal DX$, and notice that $\mathcal S$ is a fibered 2--knot in $\mathcal DX$ with fiber $M=\mathcal DH$.  The monodromy $\Phi:M\to M$ is the \emph{double} of the monodromy $\phi:H\to H$, so $E_\mathcal S=\overline{\mathcal{D}X \setminus \nu \mathcal{S} }\cong M^\circ \times_\Phi S^1$.


\begin{lemma}\label{lemma:framing}
The pair $(\mathcal{D}X, \mathcal{S})$ depends only on the parity of $k$, and the two possible pairs are related by a Gluck twist on $\mathcal S$.
\end{lemma}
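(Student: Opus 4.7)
The plan is to decompose $(\mathcal{D}X,\mathcal{S})$ as a tubular neighborhood of $\mathcal{S}$ glued to its exterior, and to show that this gluing depends on $k$ only up to a Gluck twist $\gamma$ whose square is isotopic to the identity. Since $\gamma$ itself represents the Gluck twist on $\mathcal{S}$, this immediately yields both assertions of the lemma.

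First I would identify the pieces independent of $k$. Because $D$ is a properly embedded disk, its normal bundle is trivial: $\nu D\cong D\times D^2$, and the portion of $\partial(\nu D)$ lying on $\partial X^{(k)}$ is the free boundary $D\times\partial D^2$. Doubling along this free boundary yields
\[
\nu\mathcal{S}\;\cong\;(D\times D^2)\cup_{\partial D\times D^2}(D\times D^2)\;\cong\;\mathcal{D}D\times D^2\;\cong\;S^2\times D^2,
\]
which does not depend on $k$. Removing $\nu\mathcal{S}$ from $\mathcal{D}X^{(k)}$ deletes only the doubled 2--handle, so
\[
E_\mathcal{S}\;\cong\;X_0\cup_{\partial X_0\setminus\nu C}(-X_0),
\]
also independent of $k$, where $C$ is the attaching circle. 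Both $\partial\nu\mathcal{S}$ and $\partial E_\mathcal{S}$ are the double of the attaching solid torus, i.e.\ copies of $S^2\times S^1$, so the pair $(\mathcal{D}X^{(k)},\mathcal{S})$ is determined entirely by the gluing map $\psi_k\colon S^2\times S^1\to S^2\times S^1$.

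Next I would compare $\psi_k$ with $\psi_{k+1}$. By the definition of framing in the paper, the attaching maps satisfy $f_{k+1}=f_k\circ\tau$, where $\tau\colon D^2\times S^1\to D^2\times S^1$ is the full right-handed twist $(r,\theta)\mapsto(e^{i\theta}r,\theta)$. Doubling the $D^2$ factor of the attaching region to an $S^2$, the extension of $\tau$ to $S^2\times S^1$ sends $(x,\theta)$ to $(\rho_\theta(x),\theta)$, where $\rho_\theta\in SO(3)$ is rotation by $\theta$ about the polar axis through the doubled origin. This is precisely the Gluck twist $\gamma$, so $\psi_{k+1}=\psi_k\circ\gamma$ and inductively $\psi_k=\psi_0\circ\gamma^k$.

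Finally, I would invoke the classical fact that $\pi_1(\mathrm{Diff}^+(S^2))=\pi_1(SO(3))\cong\mathbb{Z}/2$: the loop $\theta\mapsto\rho_{2\theta}$ is twice a generator, hence null-homotopic, so $\gamma^2$ is isotopic to the identity in $\mathrm{Diff}(S^2\times S^1)$. Any such isotopy extends to a diffeomorphism of $\nu\mathcal{S}=S^2\times D^2$ fixing the core sphere, yielding a diffeomorphism of pairs $(\mathcal{D}X^{(k)},\mathcal{S})\cong(\mathcal{D}X^{(k+2)},\mathcal{S})$. Thus the pair depends only on the parity of $k$, and the two parities differ by one factor of $\gamma$, i.e.\ by a Gluck twist on $\mathcal{S}$. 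The hard part will be the second step: careful bookkeeping of orientations in $X_0$ and $-X_0$ and of the identification of the doubled $S^2$-fiber is needed to check that the doubled framing twist is \emph{precisely} the Gluck twist and not some other nontrivial element of the mapping class group of $S^2\times S^1$. Once that identification is secured, the other two steps are largely formal.
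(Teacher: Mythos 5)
Your proof is correct and follows the same overall strategy as the paper's: both isolate $\nu\mathcal{S}\cong S^2\times D^2$ and its complement $E_\mathcal{S}$ (independent of $k$) and reduce the problem to understanding the gluing map along $S^2\times S^1$. The difference is one of presentation rather than substance. The paper decomposes the gluing-in of $S^2\times D^2$ as attaching a 2--handle and a 4--handle, identifies the 2--handle with $h$ itself, and then appeals to Gluck's theorem that $\rho$ generates the relevant $\Z/2$ of non-extendable mapping classes; the correspondence ``framing $k$ $\leftrightarrow$ gluing by $\rho^k$'' is asserted rather than derived. You make this correspondence explicit by doubling the full framing twist $\tau$ on $D^2\times S^1$ and checking directly that it yields the $SO(3)$--rotation family, i.e.\ the Gluck twist $\gamma$; and you re-derive $\gamma^2\simeq\mathrm{id}$ from $\pi_1(SO(3))\cong\Z/2$ rather than citing Gluck. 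The orientation bookkeeping you flag does go through: parametrizing $S^2=\mathcal D(D^2)$ as $\mathbb C\cup\{\infty\}$ and using $z\mapsto 1/\bar z$ for the second (orientation-reversed) hemisphere, the two copies of $z\mapsto e^{i\theta}z$ glue to the genuine rotation $\Omega_\theta$, so the doubled framing twist is exactly the Gluck map. This makes your argument slightly more self-contained than the paper's, at the cost of spelling out what the paper takes as an application of Gluck's uniqueness result.
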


\begin{proof}
The pair $(\mathcal{D}X, \mathcal{S})$ is obtained from $E_\mathcal S$ by gluing in $S^2 \times D^2$. Gluing in a copy of $S^2 \times D^2$ to a 4--manifold with $S^2 \times S^1$ boundary amounts to attaching a 2--handle and a 4--handle, since $S^2 \times D^2$ admits a handle decomposition relative to its boundary with one 2--handle and one 4--handle. We can choose the 2--handle to be $h$ from the previous paragraph; in other words, we can isotope the attaching region of the 2--handle to lie in one hemisphere of $S^2\times S^1$. There is a unique way to attach the 4--handle. Gluck \cite{gluck:two-spheres} showed that up to isotopy there is a unique, non-trivial way to glue in $S^2 \times D^2$.  This corresponds to the unique element $\rho$ in the mapping class group of $S^2\times S^1$ that doesn't extend over $S^2\times D^2$. Choosing framing $k$ corresponds to gluing by the map $\rho^k$, and since $\rho^2$ is isotopic to the identity, the result follows. Lastly, we point out that the manifolds coming from even or odd framings are related by a Gluck twist. (See Section \ref{section:mono} for more details.)
\end{proof}

In this paper we are interested in disk-knots with fiber $H_g=\natural_gS^1\times D^2$ and 2--knots with fiber $M_g=(\#_g S^1 \times S^2)^\circ$, which are clearly related by doubling in the way we have just discussed.  

\begin{figure}
\centering
\includegraphics[scale = .12]{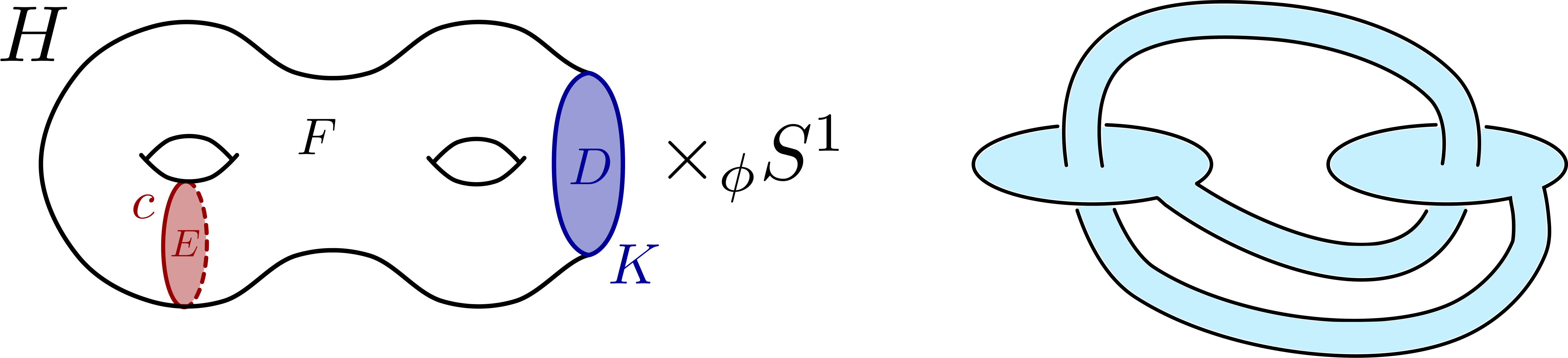}
\put(-335,-25){\large (a)}
\put(-85,-25){\large (b)}
\caption{A schematic (a) for a handlebody bundle, and (b) an immersed ribbon disk in $S^3$.  In (a), $\phi$ has been isotoped to preserve $D$, which we think of as a slice disk for the fibered knot $K$, whose fiber is $F$.  The boundary of the handlebody bundle is the surface bundle $S^3_0(K)$, which has $\widehat F=F\cup_K D$ as fibers.  A Stallings curve $c$ is shown as the boundary of a disk $E$ in $H$.}
\label{fig:monodromy_ribbon}
\end{figure}

Let $D\subset B^4$ be a fibered disk-knot with fiber $H_g$ and monodromy $\phi$, so $\overline{B^4\setminus\nu(D)}\cong H_g\times_\phi S^1$.  Note that $\partial(B^4,D)=(S^3,K)$ is a fibered slice knot with fiber $\Sigma_g^\circ$ and monodromy $\varphi$, where $\varphi=\phi\vert_{(\partial H_g)^\circ}$, as in Example \ref{ex:Stallings} and Figure \ref{fig:monodromy_ribbon}(a). Doubling $(B^4, D)$ results in a fibered 2--knot $\mathcal S \subset S^4$ with fiber $M_g^\circ$. 

The rest of the paper is devoted to the analysis of these objects.

\section{Homotopy-ribbon knots and disks}\label{section:homotopy}

Following \cite{cochran:ribbon_knots}\footnote{Although a more common definition for homotopy-ribbon is what we call \emph{weakly} homotopy-ribbon, we use the definition given by Cochran in \cite{cochran:ribbon_knots} since it makes our statements simpler.}, we will say that an $n$--knot $K\subset S^{n+2}$ is \emph{homotopy-ribbon} if it bounds a slice disk $D \subset D^{n+3}$ with the property that $\overline{D^{n+3} \setminus \nu D}$ admits a handle decomposition with only 0--, 1--, and 2--handles, in which case $D$ is called a \emph{homotopy-ribbon} disk for $K$. The knot $K$ is called \emph{ribbon} if it bounds an immersed disk in $S^{n+2}$ with only ribbon singularities. (See \cite{kawauchi} for details.) These singularities can be removed by pushing the disk into $D^{n+3}$, giving a \emph{ribbon disk} for $K$. See Figure \ref{fig:monodromy_ribbon}(b) for an example.

We call a slice knot $K\subset S^{n+2}$  \emph{weakly homotopy-ribbon} if there is a surjection
$$\pi_1(S^{n+2}\setminus K)\twoheadrightarrow\pi_1(D^{n+3}\setminus D).$$
We have the following inclusions among $n$-knots:
\begin{displaymath}ribbon \subseteq \mbox{\emph{homotopy-ribbon}} \subseteq \mbox{\emph{weakly homotopy-ribbon}} \subseteq slice\end{displaymath}

The Slice-Ribbon Conjecture postulates that every slice knot in $S^3$ is ribbon.  In fact, in the classical case, it is not known whether any of the converse inclusions hold or not.  On the other hand, it is known that every 2--knot is slice \cite{kervaire}, while some 2--knots are not ribbon \cite{yajima}.  We now state the theorems of Cochran and Casson-Gordon that motivate the present work. 

\begin{theorem}[Cochran, \cite{cochran:ribbon_knots}]\label{thm:cochran}
Let $K\subset S^4$ be a fibered 2--knot with fiber $M^\circ$.  Then $K$ is homotopy-ribbon if and only if $M\cong \#_gS^1\times S^2$.
\end{theorem}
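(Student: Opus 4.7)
Cochran's theorem is a biconditional; I will outline a plan for each direction separately.

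For $(\Leftarrow)$, suppose $M \cong \#_g S^1 \times S^2$. My plan is to produce a homotopy-ribbon disk from the fibered structure. Since $M$ bounds the $4$--dimensional handlebody $Y_g := \natural_g(S^1 \times D^3)$, the theorem of Laudenbach-Po\'enaru allows one to extend any self-diffeomorphism of $M$ to $Y_g$. After isotoping the monodromy $\Phi:M^\circ \to M^\circ$ to fix a $3$--ball in $\partial Y_g$, extend it to $\Phi':Y_g\to Y_g$ and form the $5$--dimensional mapping torus $Z = Y_g\times_{\Phi'} S^1$. This $Z$ bounds the $0$--surgery $S^4_0(K)$ and inherits a handle decomposition with only $0$--, $1$--, and $2$--handles (from the handles of $Y_g$ combined with the $S^1$ factor). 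To convert $Z$ into a genuine disk exterior bounded by $S^4$ with trivial $\pi_1$, I would attach additional $2$--handles to $Z$ that simultaneously kill $\pi_1(Z) = F_g\rtimes\Z$ and convert $\partial Z$ from $S^4_0(K)$ back to $S^4$, obtaining a simply connected $5$--manifold $W$ containing $K$ as the boundary of an embedded disk $D$. A Wang-sequence calculation, using the fact that $1-\Phi_*$ is automatically an automorphism of $H_1(M) = \Z^g$ for a fibered $2$--knot in $S^4$ (since $H_1(E_K) = \Z$), confirms $W$ is contractible, hence $D^5$ by the high-dimensional Poincar\'e conjecture.

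For $(\Rightarrow)$, suppose $K$ bounds a homotopy-ribbon disk $D \subset D^5$ with exterior $W$. Then $W$ has a handle decomposition with only $0$--, $1$--, and $2$--handles, and $\pi_1(W) = 1$ automatically (meridians of $K$ normally generate $\pi_1(E_K)$ and bound disks in $W$), so $W$ has the homotopy type of a wedge of $2$--spheres $\vee_n S^2$. My plan is in two steps: first, show $\pi_1(M)$ is free of rank $g$; second, invoke the classical $3$--manifold fact that any closed orientable $3$--manifold with free fundamental group of rank $g$ is diffeomorphic to $\#_g S^1 \times S^2$ (via prime decomposition and geometrization). For the first step, I would use the embedded $2$--spheres in $W$ representing generators of $H_2(W)$, isotoped to meet $\partial W$ transversally, to extract a system of compressing disks for the fiber $M^\circ \subset E_K \subset \partial W$. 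Compressing $M^\circ$ along these disks, and applying the sphere theorem, should progressively simplify $M$ until it is identified as a connect sum of $g$ copies of $S^1 \times S^2$ with possibly trivial $S^3$ summands.

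The main obstacle is the compression step of the reverse direction: one must realize the $2$-spheres arising from $\pi_2(W) = \Z^n$ as a useful system of compressing disks that lie in the fiber $M^\circ$, and show the compressions terminate in spheres that bound balls. The fibered structure of $E_K$ is the essential tool, as it allows one to ``spread'' these spheres around the monodromy so that they intersect the fiber transversally in essential curves; the absence of $3$--handles in $W$ is what prevents the compression argument from producing summands other than $S^1\times S^2$'s and $S^3$'s. Once $M$ is identified as a connect sum of $S^1 \times S^2$ factors, counting $H_1$ forces the number of summands to equal $g$.
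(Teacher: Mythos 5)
The paper does not prove Theorem \ref{thm:cochran}; it states the result and cites Cochran directly, so there is no in-paper proof to compare against. Judging your outline on its own terms:

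Your $(\Leftarrow)$ direction is the right approach, one dimension up from the Casson--Gordon construction: cap off $\Phi$ to a diffeomorphism of $M = \#_g S^1 \times S^2$, extend over $Y_g = \natural_g (S^1 \times D^3)$ by Laudenbach--Po\'enaru, and form the $5$--dimensional mapping torus $Z$. One correction: you should attach exactly \emph{one} $5$--dimensional $2$--handle $h$ to $Z$ (along a circle $\{pt\}\times S^1 \subset B^3\times S^1 \subset \partial Z$ with a suitable framing), not a collection of them; the cocore of $h$ is the desired disk. That this single $2$--handle kills $\pi_1(Z) \cong F_g \rtimes \Z$ is not something you get to arrange freely by adding more handles --- it holds automatically because the identical relator computation, run on the boundary, recovers $\pi_1(S^4) = 1$ (the relation $\Phi_*(x) = x$ for all $x$ already kills $F_g$). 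With $\pi_1$ trivial, the Euler-characteristic and handle count force contractibility, and the high-dimensional Poincar\'e conjecture gives $B^5$.

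Your $(\Rightarrow)$ direction contains a genuine error that derails it. You assert $\pi_1(W) = 1$ because ``meridians of $K$ \ldots bound disks in $W$.'' They do not: a meridian of $K$ bounds the normal disk to the slice disk $D$, which lies in $\nu D$, not in $W = \overline{B^5 \setminus \nu D}$. In fact $H_1(W) \cong \Z$ is generated by the meridian, so $\pi_1(W)$ is never trivial --- even for the unknotted $2$--sphere one has $W \cong B^3 \times S^1$ with $\pi_1(W) \cong \Z$. Consequently $W$ is not a wedge of $2$--spheres, and the compression argument built on that claim has nothing to stand on. What the homotopy-ribbon hypothesis actually gives you is the weaker surjection $\pi_1(E_K) \twoheadrightarrow \pi_1(W)$, which restricts to a surjection on commutator subgroups $\pi_1(M) \cong [\pi_1(E_K),\pi_1(E_K)] \twoheadrightarrow [\pi_1(W),\pi_1(W)] \cong \pi_1(\widetilde W)$; Cochran's argument works with the infinite cyclic cover $\widetilde W$ together with the restriction on handle indices to force $\pi_1(M)$ to be free, at which point your appeal to the Kneser--Stallings decomposition and the Poincar\'e conjecture finishes the job. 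As written, though, the middle of your $(\Rightarrow)$ argument is broken.
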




\begin{theorem}[Casson-Gordon, \cite{casson-gordon:fibered_ribbon}]\label{CGtheorem}
Let $K\subset S^3$ be a fibered 1--knot with monodromy $\varphi:\Sigma_g^\circ\to\Sigma_g^\circ$. 
\begin{enumerate}
\item If  $K$ bounds a homotopy-ribbon disk $D\subset B^4$, then the closed monodromy $\widehat\varphi:\Sigma_g\to\Sigma_g$ extends over a handlebody to $\phi:H_g\to H_g$.
\item If the closed monodromy $\widehat\varphi$ extends over a handlebody to $\phi:H_g\to H_g$, then there is a homotopy-ribbon-disk $D'$ for $K$ in a homotopy 4--ball $B$ such that $\overline{B\setminus\nu(D')}\cong H_g\times_\phi S^1$.
\end{enumerate}
\end{theorem}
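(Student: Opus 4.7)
The plan is to prove each direction separately.

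For the forward direction, suppose $H \cong H_g$. The handlebody $H_g$ has a handle decomposition with one $0$--handle and $g$ $1$--handles. Taking the mapping torus, each $i$--handle of $H_g$ contributes an $i$--handle and an $(i+1)$--handle to $H_g \times_\phi S^1$ (the latter arising from the $S^1$ factor, with attaching map twisted by $\phi$), so $\overline{B^4 \setminus \nu D} \cong H_g \times_\phi S^1$ admits a handle decomposition with one $0$--handle, $g+1$ $1$--handles, $g$ $2$--handles, and no handles of higher index. Thus $D$ is homotopy-ribbon.

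For the converse, suppose $D$ is homotopy-ribbon, so $E := \overline{B^4 \setminus \nu D}$ has a handle decomposition with no $3$-- or $4$--handles. My plan is to pass to a fibered $2$--knot and invoke Theorem \ref{thm:cochran}. Writing $B^5 = B^4 \times I$, the $3$--ball $D \times I \subset B^5$ has $\partial(D \times I) = \mathcal{D}D \subset \partial B^5 = S^4$, and its exterior in $B^5$ is $E \times I$. Since $E \times I$ deformation retracts onto $E$, it also has a handle decomposition with no handles of index greater than $2$. So $\mathcal{S} := \mathcal{D}D$ is a fibered homotopy-ribbon $2$--knot in $S^4$ with fiber $(\mathcal{D}H)^\circ$, and Cochran's theorem identifies $\mathcal{D}H \cong \#_{g'} S^1 \times S^2$ for some $g'$.

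The main obstacle is the final step: concluding $H \cong H_g$ from this identification. My plan has two ingredients. First, the natural folding retraction $\mathcal{D}H \twoheadrightarrow H$ splits the inclusion $H \hookrightarrow \mathcal{D}H$, making $\pi_1(H)$ a retract of the free group $\pi_1(\mathcal{D}H) = F_{g'}$; by Nielsen--Schreier, $\pi_1(H)$ is itself free of some rank $r$. Second, I invoke the classical theorem that a compact orientable irreducible $3$--manifold with non-empty connected boundary and free fundamental group is a handlebody. Prime decomposition writes $H = M_0 \# \#_k S^1 \times S^2$ with $M_0$ irreducible, and the classical theorem then identifies $M_0 \cong H_{r-k}$, whose boundary $\Sigma_{r-k}$ matches $\partial H = \Sigma_g$ to give $r - k = g$. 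The remaining delicate point, where I expect the heaviest work, is to rule out $k > 0$; otherwise a Mayer--Vietoris computation yields $\mathrm{rank}\, H_1(\mathcal{D}H) = 2\,\mathrm{rank}\, H_1(H) - g = g + 2k$, so any closed $S^1 \times S^2$ summands of $H$ would inflate $g'$ past $g$. Ruling these out should follow by combining the full strength of the handle-type condition on $E$ (in particular that $E$ is homotopy equivalent to a $2$--complex, forcing $\pi_2(E)$ to be projective over $\mathbb{Z}[\pi_1(E)]$) with the Wang-sequence constraints coming from $H_2(E) = H_3(E) = 0$ for any disk complement in $B^4$. Once $k = 0$ is established, $H \cong H_g$ follows.
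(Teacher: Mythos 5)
Your easy direction is just the paper's Lemma~\ref{lemma:handlebody_bundle} inlined, and the first half of your converse --- doubling to a fibered $2$--knot with fiber $(\mathcal DH)^\circ$ and invoking Cochran's Theorem~\ref{thm:cochran} --- matches the paper exactly. From there the paper reaches $H \cong H_g \# \bigl(\#_k S^1\times S^2\bigr)$ by compressing $F = \partial H$ inside $\mathcal DH$ via the Loop Theorem; your route through the folding retraction, Nielsen--Schreier, and the classification of compact orientable irreducible $3$--manifolds with free $\pi_1$ arrives at the same normal form and is a perfectly reasonable alternative.

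The genuine gap is exactly the step you flag: ruling out $k>0$. I do not think the ingredients you list can close it. The Wang sequence for $E = H\times_\phi S^1$ (using $H_2(E)=H_3(E)=0$) does show that $\phi_*-1$ is an isomorphism on both $H_1(H)$ and $H_2(H)\cong\Z^k$; while this rules out $k=1$, for $k\geq 2$ nothing prevents $\phi_*$ from acting on $H_2(H)$ by a matrix with $\phi_*-1$ invertible, so the Wang constraints alone give no contradiction. Similarly, the handle-type condition on $E$ does not force $\pi_2(E)$ (which equals $\pi_2(H)$ via the fibration, and vanishes iff $k=0$) to be zero: a $4$--manifold built from handles of index $\le 2$ can easily have nontrivial $\pi_2$, and in any case projectivity of $\pi_2$ is not vanishing.

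The missing idea in the paper is group-theoretic rather than homological, and it is where the ribbon hypothesis is really spent. Let $W=\overline{B^4\setminus\nu D}$ and pass to its infinite cyclic cover: $\widetilde W\cong H\times\R$ and $\partial\widetilde W\cong \Sigma_g\times\R$, so $\pi_1(H)$ and $\pi_1(\Sigma_g)$ are identified with the commutator subgroups of $\pi_1(W)$ and of $\pi_1(\partial W)=\pi_1\bigl(S^3_0(\partial D)\bigr)$. The (weakly) homotopy-ribbon condition gives a surjection $\pi_1(S^3\setminus\partial D)\twoheadrightarrow\pi_1(W)$, which descends to $\pi_1\bigl(S^3_0(\partial D)\bigr)\twoheadrightarrow\pi_1(W)$ (the longitude dies) and restricts to a surjection of commutator subgroups, i.e.\ $\pi_1(\Sigma_g)\twoheadrightarrow\pi_1(H)\cong F_{g+k}$. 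Since a genus--$g$ surface group cannot surject onto a free group of rank exceeding $g$ --- the pulled-back classes would span an isotropic subspace of $H^1(\Sigma_g;\Q)$ for the cup-product pairing --- this forces $k=0$. Your sketch never uses the $\pi_1$--surjectivity part of the ribbon condition, and that is precisely what is needed.
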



We remark that the original theorem stated by Casson-Gordon is slightly more general, and uses the property we call ``weakly homotopy-ribbon''.  However, the theorem  can be strengthened to conclude homotopy-ribbon, since every mapping torus of $H_g$ can be built with only 0--, 1--, and 2--handles (see Lemma \ref{lemma:handlebody_bundle}). 
\\

We see there is a strong correspondence between fibered homotopy-ribbon knots and conditions on the fiber. Here, we expand the picture to include fibered 2--disk-knots. 

\begin{reptheorem}{thm:disk_knot}
Let $D$ be a fibered disk-knot in $B^4$ with fiber $H$.  Then $D$ is homotopy-ribbon if and only if $H\cong H_g$ for some $g$.
\end{reptheorem}

\begin{proof}
Suppose that $D$ is a fibered homotopy-ribbon disk-knot.  Let $\mathcal S\subset S^4$ be the 2--knot obtained by doubling $D$.  Then $\mathcal S$ is a fibered homotopy-ribbon 2--knot with fiber $M=H\cup_FH$, where $F = \partial H$.  To see this, consider the product $(B^4 \times I, D \times I)$ of $(B^4, D)$. The 3--ball $D \times I$ is a slice disk in $B^5$ for $\mathcal S = \mathcal D D = \partial(D \times I)$. In fact, it is a homotopy-ribbon disk;  the exterior of $D \times I$ is obtained by crossing the exterior of $D$ with $I$, and this preserves the indices of the handle decompositions. By Theorem \ref{thm:cochran}, this means that $M\cong M_h=\#_hS^1\times S^2$ for some $h$.   

Now, suppose that the genus of $F=\partial H$ is $g$.  Since $F$ is a closed, separating surface in $M$, it can be compressed $g$ times (by the Loop Theorem \cite{papa}).  Since $M=H\cup_FH$ is a double, these compressions can be done one by one and symmetrically to both sides.  It follows that $H=H_g\#(\#_{\frac{h-g}{2}}S^1\times S^2)$.  However, we claim that there is a surjection $\pi_1(F)\twoheadrightarrow\pi_1(H)$.  This implies that $h=g$, as desired.

To see where this surjection comes from, consider the infinite cyclic cover $\widetilde W$ of $W=\overline{B^4\setminus\nu(D)}$.  We have that $\widetilde W\cong H\times \R$ and $\partial \widetilde W\cong F\times\R$. Furthermore, we have the following identifications of the commutator subgroups:
$$\pi_1(H)\cong \pi_1(\widetilde W)\cong[\pi_1(W),\pi_1(W)]$$ 
and 
$$\pi_1(F)\cong\pi_1(\partial \widetilde W)\cong[\pi_1(Y),\pi_1(Y)],$$
where $Y=\partial W\cong S^3_0(\partial D)$ is given by 0--Dehn surgery on the knot $\partial D\subset S^3.$
  
By assumption, $\pi_1(S^3\setminus \partial D)\twoheadrightarrow\pi_1(W)$  (here we use the weaker definition of homotopy-ribbon, which follows from the stronger), which implies that $\pi_1(S^3_0(\partial D))\twoheadrightarrow\pi_1(W)$ (since the class of the longitude includes to the trivial element).  This gives a surjection between the corresponding commutator subgroups.  It follows that $\pi_1(F)\twoheadrightarrow\pi_1(H)$, as desired.

Conversely, suppose that $D$ is a fibered disk-knot with handlebody fiber.  Then, by Lemma \ref{lemma:handlebody_bundle} below, the exterior of $D$ in $B^4$ can be built using only 0--, 1--, and 2--handles.  This, by definition, implies that $D$ is  homotopy-ribbon.

\end{proof}

%



The following lemma appears implicitly in \cite{aitchison-rubinstein} (see also \cite{montesinos}).

\begin{lemma}\label{lemma:handlebody_bundle}
A mapping torus with fiber $H_g$ can be built with only 0--, 1--, and 2--handles.
\end{lemma}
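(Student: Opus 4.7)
The plan is to produce an explicit handle decomposition of $H_g\times_\phi S^1$ by combining the standard handle decomposition of $H_g$ (one 3--dimensional 0--handle $h^0$ and $g$ 3--dimensional 1--handles $h^1_i$) with the minimal handle decomposition of $S^1$ (one 0--cell and one 1--cell). Writing $S^1 = e^0\cup e^1$ and letting $X_0,X_1$ be the preimages of $e^0,e^1$ in the mapping torus, each $X_j\cong H_g\times I$, and $X_1$ is glued to $X_0$ along $H_g\sqcup H_g$ via $\id$ on one side and via $\phi$ on the other.

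First I would endow $X_0\cong H_g\times I$ with the product handle decomposition coming from the handles of $H_g$: each 3--dimensional $k$--handle $h^k$ of $H_g$ gives a 4--dimensional $k$--handle $h^k\times I$ of $X_0$. This contributes one 0--handle and $g$ 1--handles to the mapping torus. The key step is then to realize the attachment of $X_1$ as a sequence of handle attachments by building $X_1$ piece-by-piece via the handles of $H_g$: first attach $h^0\times I$, then each $h^1_i\times I$. The crucial observation is that each such piece $h^k\times I\cong D^k\times D^{3-k}\times D^1$ is a 4--ball whose attaching region (the part of its boundary glued to the already-built manifold) equals $\partial D^{k+1}\times D^{3-k}$ after rearranging, so each is a standard 4--dimensional $(k+1)$--handle.

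Concretely, $h^0\times I$ is attached along its two end 3--balls $h^0\times\partial I$, glued into the two copies of $H_g\subset\partial X_0$ (related by $\phi$), yielding a 4--dimensional 1--handle. Each $h^1_i\times I$ is attached along $(h^1_i\times\partial I)\cup(\partial D^1\times D^2\times I)$, the union of the two end copies of $h^1_i$ (glued to $\partial X_0$) and two lateral rectangles (glued to the 1--handle $h^0\times I$ just attached); this attaching region assembles into an embedded solid torus $S^1\times D^2$, exhibiting $h^1_i\times I$ as a 4--dimensional 2--handle. Totaling up gives one 0--handle, $g+1$ 1--handles, and $g$ 2--handles, with no handles of index 3 or 4, which proves the lemma.

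The main obstacle I anticipate is the careful bookkeeping needed to check that the attaching region for each $h^1_i\times I$ genuinely forms an embedded solid torus in the boundary of the manifold built so far---in particular, that the lateral rectangles fit together with the 1--handle already attached to close up into an $S^1\times D^2$. The twist by $\phi$ enters only through the specific attaching maps and not through the indices, so the count is insensitive to $\phi$; essentially the same argument generalizes to show that a mapping torus of any compact 3--manifold built from handles of index $\leq k$ is built from handles of index $\leq k+1$.
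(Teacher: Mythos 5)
Your argument is correct and is exactly the approach the paper has in mind: its one-line hint that ``an $n$--handle in the fiber gives rise to an $n$--handle and an $(n+1)$--handle in the mapping torus'' is precisely the bookkeeping you carry out (the 0--handle of $H_g$ produces a 0-- and a 1--handle, each 1--handle produces a 1-- and a 2--handle). No substantive difference.
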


The key point is that an $n$--handle in the fiber gives rise to an $n$-handle and an $(n+1)$--handle in the mapping torus.

It is worth noting that the restriction to homotopy-ribbon disk-knots is important, since any fibered slice knot can bound infinitely many different fibered slice disks in the 4--ball, none of which are homotopy-ribbon.  To see this, take any fibered slice disk $D$ in the 4--ball with boundary some fibered slice knot and form the fiber-preserving connected sum $D\#\mathcal{S}$ for any fibered 2--knot $\mathcal{S}$. For example, if we take any fibered 2--knot $\mathcal{S}\subset S^4$ and remove a small 4--ball centered on a  point in $\mathcal{S}$, then the result is a slice disk $D$ for the unknot whose fibers are the same as those of $\mathcal{S}$.


\begin{example}
Many infinite families of handlebody bundles whose total space is the complement of a ribbon disk in $B^4$ were produced by Aitchison-Silver using \emph{construction by isotopy}.  Their main result states that the boundaries of these fibered ribbon disks are doubly slice, fibered ribbon knots that, collectively, realize all possible Alexander polynomials for such knots.  See \cite{ait-silver} for complete details.
\end{example}

The result of Casson-Gordon (Theorem \ref{CGtheorem}) allows us to start with a fibered homotopy-ribbon 1--knot and construct a fibered homotopy-ribbon disk in a homotopy 4--ball $B$. This is accomplished by extending the (closed) monodromy to $H_g$ and adding a 2--handle to the resulting mapping torus. Next, we can double the resulting fibered disk to get a fibered homotopy-ribbon 2--knot in the homotopy 4--sphere $\mathcal{D}B$. 
However, it is not known in general whether $B$ and $\mathcal{D}B$ must be standard. This suggests the following question.

\begin{question}\label{question:inflation_standard}
Does a fibered homotopy-ribbon knot always extend to a fibered homotopy-ribbon disk in $B^4$? Specifically, must $B$ be diffeomorphic to $B^4$?
\end{question}

It is also not clear if the resulting fibered disk and 2--knot depend only on the original 1--knot, or if they also depend on the choice of monodromy extension. We remark that Long has given an example of a surface monodromy that extends over distinct handlebodies \cite{long}.  However, in this case, the corresponding handlebody bundles are diffeomorphic.

  \section{Changing monodromies by surgery}\label{section:mono}

Let $K\subset S^3$ be a fibered knot with fiber $\Sigma_g^\circ$ and monodromy $\varphi$.  An essential curve $c$ on a fiber $F_*$ is called a \emph{Stallings curve} if $c$ bounds a disk $D_c$ in $S^3$ (called a \emph{Stallings disk}), and the framing on $c$ induced by $F_*$ is zero. Given such a curve, we can cut open $E_K$ along $F_*$ and re-glue using the surface diffeomorphism $\tau_c:F_*\to F_*$, which is given by a Dehn twist along $c\subset F_*$. This operation, called a \emph{Stallings twist} along $c$, produces a new fibered knot $K'\subset S^3$ with fiber $\Sigma_g^\circ$ and monodromy $\phi'=\phi \circ \tau_c^{\pm 1}$  \cite{stallings:constructions}.

\begin{figure}
\centering
\includegraphics[scale = .08]{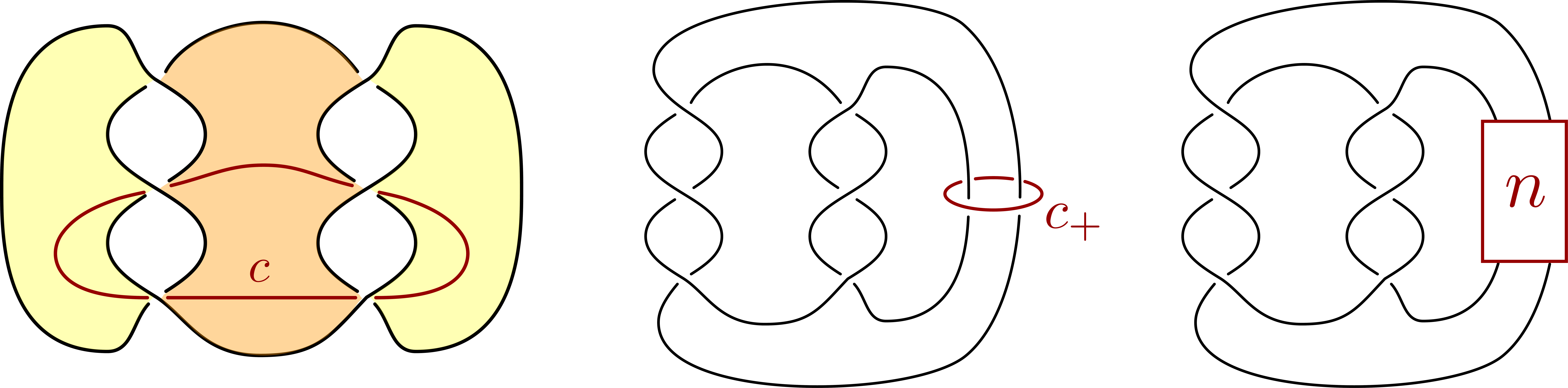}
\put(-335,-25){\large (a)}
\put(-185,-25){\large (b)}
\put(-85,-25){\large (c)}
\caption{An example of Stallings twisting on a fibered ribbon knot along a curve $c$ (shown in (a)) that extends to a disk in the handlebody. In (b), we see a push-off $c_+$ of $c$ (after isotoping the knot), and (c) shows the fibered ribbon knots obtained by twisting $n$ times along $c$. (The box represents $n$ full-twists.)}
\label{fig:Stallings_example}
\end{figure}

The Stallings twist is a classical operation and provides the first instance of a more general theme: interpreting a change to the monodromy of a fiber bundle as a surgery on the total space.  If $K'$ is obtained from $K$ by a Stallings twist, then the resulting mapping torus $E_{K'}$ is related to the original by  $\pm 1$ Dehn surgery on $c$ in $E_K$.  The following lemma allows us to conclude that Stallings twists can be used to create infinite families of distinct knots.

\begin{lemma}\label{lemma:commutator}
Suppose that $K'$ is obtained from $K$ by twisting $m$ times about a Stallings curve $c$ for $K$, and assume that $g(K)\geq 2$.  Then $K'$ is distinct from $K$ provided $|m|=1$ or $|m|>9g-3$.
\end{lemma}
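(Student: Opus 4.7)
My plan is to reduce the question of knot equivalence to a conjugacy question in the mapping class group, and then apply known lower bounds on the commutator length of powers of Dehn twists.

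First I would observe that, since fibered $1$--knots of positive genus admit an essentially unique fibered structure on their exteriors (the fiber realizes the unique minimal genus Seifert surface), any equivalence $K\cong K'$ may be arranged to be fiber-preserving. It then restricts to a diffeomorphism of the fiber $\Sigma_g^\circ$ that conjugates the monodromy $\varphi$ of $K$ to the monodromy $\varphi\tau_c^m$ of $K'$. Hence showing $K\neq K'$ is equivalent to showing that $\varphi$ and $\varphi\tau_c^m$ are not conjugate in $\Mod(\Sigma_g^\circ)$.

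The key reduction is then as follows: if some $h$ satisfies $h\varphi h^{-1}=\varphi\tau_c^m$, rearranging gives $\tau_c^m=\varphi^{-1}h^{-1}\varphi h=[\varphi^{-1},h^{-1}]$, exhibiting $\tau_c^m$ as a \emph{single commutator} in $\Mod(\Sigma_g^\circ)$. It therefore suffices to show that $\tau_c^m$ is not a single commutator in the stated ranges of $m$. For $|m|>9g-3$, I would invoke a linear lower bound on the stable commutator length of a Dehn twist (following Kotschick and Endo--Kotschick), which in this setting should give $\mathrm{scl}(\tau_c)\geq 1/(9g-3)$. Since $\mathrm{cl}(\tau_c^m)\geq |m|\cdot\mathrm{scl}(\tau_c)$, whenever $|m|>9g-3$ we get $\mathrm{cl}(\tau_c^m)>1$, contradicting the single-commutator conclusion.

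The case $|m|=1$ I expect to be the main obstacle, since the $\mathrm{scl}$ bound only gives $\mathrm{cl}(\tau_c)\geq 1/(9g-3)<1$ and so does not rule out $\tau_c$ being a single commutator on its own. I would approach this case separately by exploiting extra structural constraints on $\varphi$ coming from the fact that it is the monodromy of a fibered knot of genus $g\geq 2$ (for example, constraints on its action on $H_1(\Sigma_g^\circ)$, on the boundary, or on specific isotopy classes of curves), ruling out any conjugacy that differs from $\varphi$ by exactly one Dehn twist along $c$.
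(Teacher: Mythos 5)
Your reduction of knot equivalence to a conjugacy of monodromies, and then to the statement that $\tau_c^m$ is a single commutator in $\Mod(\Sigma_g^\circ)$, is exactly what the paper does (the paper cites Burde--Zieschang for the monodromy-conjugacy step; your argument via uniqueness of the fibered structure is a reasonable sketch of why that holds). Your treatment of $|m| > 9g-3$ via stable commutator length is essentially the idea underlying the result the paper quotes (Korkmaz, Corollary 2.4), so that part is sound even if stated as a sketch.

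The genuine gap is the $|m|=1$ case, and you have identified it but not resolved it. The paper dispatches this case by citing Korkmaz--Ozbagci (Corollary 2.6), which asserts that a single Dehn twist about a nontrivial simple closed curve on a surface of genus $\geq 2$ is \emph{never} expressible as a single commutator in the mapping class group, full stop. This is a hard group-theoretic fact, not something that follows from $\mathrm{scl}$ estimates, which is precisely why the $\mathrm{scl}$ bound is insufficient here, as you note. Your proposed workaround --- exploiting ``extra structural constraints on $\varphi$'' coming from its being a knot monodromy --- is unlikely to succeed: you would need to rule out $\tau_c = [\varphi^{-1},h^{-1}]$ for \emph{every} $h\in\Mod(\Sigma_g^\circ)$, and the monodromy $\varphi$ of a fibered knot is not constrained in a way that obviously obstructs this for arbitrary $h$. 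The right tool is the universal statement that $\tau_c$ is not a commutator, independent of which group elements might appear in the expression. Without that input, the $|m|=1$ case of the lemma remains unproved.
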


\begin{proof}
If $K'=K$, then the corresponding monodromies are conjugate \cite{burde-zieschang}:  $$\varphi\circ\tau_c^m=\sigma^{-1}\varphi\sigma.$$
This implies that $\tau_c^m$ is a commutator.  Corollary 2.6 of \cite{korkmaz-ozbagci} states that a single Dehn twist is never a commutator, which yields the result when $|m|=1$, and  Corollary 2.4 of \cite{korkmaz} states that $\tau_c^m$ cannot be a commutator when $|m|>9g-3$.
\end{proof}

There are other instances where interpreting a change to the monodromy as a surgery has proved useful.  Gompf \cite{gompf} used this approach to study Cappell-Shaneson 4--spheres, which arise as mapping tori of $T^3$. There,  the corresponding operations are changing the monodromy by twisting along a torus and performing torus surgery (see Section \ref{torus}). Nash \cite{nash} pursued this idea further in his thesis. 

In what follows, we will interpret changing the monodromy of a handlebody bundle by twisting along a disk as a 2--handle surgery, and changing the monodromy of a $\#_g S^1 \times S^2$ bundle by twisting along a sphere as a Gluck twist. We first give some precise definitions of the diffeomorphisms by which we will change the monodromies.

\begin{definition}
 Let $\omega_\theta : D^2 \rightarrow D^2$ and $\Omega_\theta : S^2 \rightarrow S^2$ be the maps given by clockwise rotation of $D^2$ and $S^2$ (about some fixed axis) through the angle $\theta$. Let $H$ be a compact 3--manifold with boundary and let $M$ be a compact 3--manifold.
 
 Given a properly embedded disk $D  \subset H$, we can identify a neighborhood of $D$ with $D^2 \times I$ and define 
 a map $\tau_D : H \rightarrow H$ to be the identity map away from $D^2 \times I$, and $\tau_D (z, t) = (\omega_{2\pi t}(z), t)$ on $D^2 \times I$. 
 
 Similarly, given an embedded 2--sphere $S$ in $M$, we can identify a neighborhood of $S$ with $S^2 \times I$ and
 define a map $\tau_S : M \rightarrow M$ to be the identity map away from $S^2 \times I$, and $\tau_S (x, t) = (\Omega_{2\pi t}(x), t)$
 on $S^2 \times I$. 
 
 We will call these maps \emph{twisting along the disk} $D$ and
 \emph{twisting along the sphere} $S$, respectively. Observe that we can iterate these operations to twist $m$ times along a disk or sphere, for any $m\in\Z$, where $m<0$ should be interpreted as  counterclockwise rotation.
 \end{definition}
 
\begin{remark} \label{rmk:disk_doubling}
The restriction $\tau_D|_{\partial H} : \partial H \rightarrow \partial H$ is a right-handed Dehn twist $\tau_c$ along the curve $c=\partial D^2$. If we double the operation of twisting along a disk we get twisting along a sphere. That is, the double of the map $\tau_D : H \rightarrow H$ is the map $\tau_S : M \rightarrow M$ where $S$ is the double of $D$ in $M=\mathcal{D}H$.
\end{remark}

Later, we will mostly be interested in the case when $H=H_g$ and $M=M_g$, but we will continue to work in generality for the time being.  Next, we introduce two types of surgery and examine how they relate to the diffeomorphisms defined above.

\subsection{The Gluck twist}

 \begin{definition}
 Given an embedded 2--sphere $S$ in a 4--manifold $X$ with trivial normal bundle, we can identify a neighborhood of $S$ with $S^2 \times D^2$.
 We produce a new 4--manifold $X_S$ by removing $S^2 \times D^2$ and re-gluing it using the map $\rho: S^2 \times S^1 \rightarrow S^2 \times S^1$
 defined by $\rho(x, \theta) = (\Omega_{\theta}(x), \theta)$. The manifold $X_S$ is said to be the result of performing a
 \emph{Gluck twist} on $S$.
 \end{definition}

Gluck \cite{gluck:two-spheres} defined the preceding operation and showed that, up to isotopy, $\rho$ is the only non-trivial way to glue in an $S^2 \times D^2$ to a manifold with $S^2\times S^1$ boundary. Furthermore, he showed that $\rho ^2$ is isotopic to the identity map, so $\rho$ has order two in the mapping class group. Therefore, performing two consecutive Gluck twists on a sphere gives you back the original 4--manifold.

Recall that gluing in a copy of $S^2 \times D^2$ to a 4--manifold with $S^2 \times S^1$ boundary amounts to attaching a 2--handle $h$ and a 4--handle. The attaching circle for $h$ will be $\{pt\} \times S^1$, and the framing will be zero (corresponding to the product framing) if we glue by the identity map, or $\pm 1$ if we glue by the Gluck twist map $\rho$. There is a unique way to attach the 4--handle in either case. Furthermore, since $\rho^2$ is trivial, we see that the framing only matters mod 2.

\begin{remark}\label{rmk:gluck}
   Note that $\rho$ is isotopic to a map taking $\Omega_{2\pi t}$ on half of the circle (identifying half the circle with $I$) and the identity map on the other half. Therefore we can substitute such a map in the gluing process without changing the result.
 \end{remark}

Performing a Gluck twist on a 2--knot $\mathcal S \subset S^4$ produces a homotopy 4--sphere $S_\mathcal S$, and it is not known, in general, when $S_\mathcal S$ is diffeomorphic to $S^4$.  For certain types of 2--knots, however, it is known that a Gluck twist not only returns $S^4$, but also preserves the equivalence class of the 2--knot. The easiest case is for the unknotted 2--sphere, which we record here for later use.
  
\begin{lemma}\label{lemma:trivial_gluck}
Let $\mathcal U$ be the unknotted 2--sphere in $S^4$. Performing a Gluck twist on $\mathcal U$ gives back $(S^4, \mathcal U)$. 
\end{lemma}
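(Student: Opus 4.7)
The plan is to exhibit the Gluck twist map as the restriction to the boundary of a self-diffeomorphism of the exterior of $\mathcal U$. Once we have such an extension, the Gluck twist can be undone by gluing in the standard $S^2\times D^2$ by the identity after applying the extension to the exterior, and we are back to $(S^4,\mathcal U)$.

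First I would fix a convenient model of $(S^4,\mathcal U)$. Since $\mathcal U$ is unknotted, I can write
\[
S^4 \;=\; \partial(D^3\times D^2) \;=\; (S^2\times D^2)\cup_{S^2\times S^1}(D^3\times S^1),
\]
with $\mathcal U = S^2\times\{0\} \subset S^2\times D^2$, so that $\nu\mathcal U = S^2\times D^2$ and $E_{\mathcal U}= D^3\times S^1$. The Gluck-twisted pair is then $(M,\mathcal U')$ where
\[
M \;=\; (D^3\times S^1)\cup_{\rho}(S^2\times D^2), \qquad \mathcal U' = S^2\times\{0\},
\]
and $\rho(x,\theta)=(\Omega_\theta(x),\theta)$ is the Gluck gluing.

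Next I would extend $\rho$ across $E_{\mathcal U}=D^3\times S^1$. The rotations $\Omega_\theta:S^2\to S^2$ are the boundary restrictions of rotations $\widetilde\Omega_\theta:D^3\to D^3$ through the same axis, so the formula
\[
\widetilde\rho(y,\theta)=(\widetilde\Omega_\theta(y),\theta)
\]
defines a self-diffeomorphism of $D^3\times S^1$ whose boundary restriction is exactly $\rho$.

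Finally, I would assemble the diffeomorphism of pairs. Define $\Psi:M\to S^4$ by $\widetilde\rho$ on the $D^3\times S^1$ factor and the identity on the $S^2\times D^2$ factor. On the common boundary $S^2\times S^1$, the $E_{\mathcal U}$ side pushes the $\rho$-gluing forward to the identity gluing, while the $\nu\mathcal U$ side is already the identity, so $\Psi$ is well defined. By construction $\Psi(\mathcal U')=\mathcal U$, giving $(M,\mathcal U')\cong(S^4,\mathcal U)$.

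There is essentially no obstacle here beyond identifying $E_{\mathcal U}$ correctly; the proof is really just the observation that rotations of $S^2$ bound rotations of $D^3$, which is precisely what fails for a knotted sphere and is what makes the general Gluck twist interesting.
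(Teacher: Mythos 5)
Your proof is correct and follows essentially the same route as the paper: identify $E_{\mathcal U}$ with $B^3\times S^1$, extend the Gluck gluing map $\rho$ over $B^3\times S^1$, and use this extension to exhibit a diffeomorphism of pairs. The only difference is that you spell out the extension explicitly via rotations of $D^3$, whereas the paper simply observes that $\rho$ ``clearly extends.''
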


\begin{proof}
 It is a basic fact that $\overline{S^4 \setminus \nu \mathcal U} = B^3 \times S^1$. If $S_\mathcal U$ is the result of performing a Gluck twist on $\mathcal U$,
 then $S_\mathcal U = B^3 \times S^1 \cup_\rho S^2 \times D^2$. Since $\rho$ clearly extends over $B^3 \times S^1$, we see that $S_\mathcal U=B^3\times S^1\cup_\rho S^2\times D^2$ is diffeomorphic to $B^3 \times S^1 \cup_{id} S^2 \times D^2 \cong S^4$, and the diffeomorphism is the identity map on $S^2\times D^2$ (and so preserves $\mathcal U$). We think of this diffeomorphism as ``untwisting" along $B^3\times S^1$.
\end{proof}

\begin{remark}\label{rmk:trivial_gluck}
In fact, this can be generalized to any 2--knot $\mathcal S$ that bounds $(\#_g S^1 \times S^2)^\circ$. 
See Chapter 13 of  \cite{kawauchi} for details.  It follows, for example, that if $\mathcal S\subset S^4$ is a homotopy-ribbon 2--knot, then $S_\mathcal S\cong S^4$, and the Gluck twist preserves the homotopy-ribbon 2--knot.
\end{remark}

Our main observation here is that changing the monodromy of a 4--dimensional mapping torus by twisting along a sphere contained in a fiber corresponds to performing a Gluck twist on the total space. 

\begin{proposition} \label{prop:sphere_twist}
Let $M$ be a compact 3--manifold, and consider the mapping torus $X = M \times_\Phi S^1$. Let $S$ be an embedded 2--sphere in a fiber $M_*$. Let $X' = M \times_{\phi \circ \tau_S} S^1$ be the mapping torus formed by cutting $X$ along $M_*$ and re-gluing with the diffeomorphism $\tau_S:M_*\to M_*$. Then $X'$ is obtained from $X$ by performing a Gluck twist on $S$. Furthermore, applying $\tau_S$ twice gives $X''=M \times_{\phi \circ \tau_S^2} S^1$, which is diffeomorphic to $X$.
\end{proposition}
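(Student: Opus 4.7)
The plan is to localize the cut-and-paste operation to a tubular neighborhood of $S$ and then exhibit an explicit diffeomorphism identifying it with the Gluck twist. After isotoping the monodromy $\Phi$ within its isotopy class so that it is the identity on a small neighborhood $S^2 \times I$ of $S$ inside $M_*$, the tubular neighborhood $\nu(S) \subset X$ acquires a product structure $S^2 \times I \times [-\epsilon, \epsilon] \cong S^2 \times D^2$, where the $[-\epsilon, \epsilon]$ factor is a short arc in the $S^1$ direction of the mapping torus. The fiber $M_*$ meets $\nu(S)$ along the meridional 3--disk $S^2 \times I \times \{0\}$, and since $\tau_S$ is supported in $S^2 \times I$, both the cut-and-paste construction of $X'$ and a Gluck twist along $S$ are supported in $\nu(S)$; it therefore suffices to match them inside this neighborhood.

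Let $\nu^\pm(S) = S^2 \times I \times [0, \pm\epsilon]$ denote the two halves of $\nu(S)$ obtained by cutting along $M_* \cap \nu(S)$. The core of the argument is to construct a diffeomorphism
\[
\Psi : \nu^+(S) \cup_{\tau_S} \nu^-(S) \longrightarrow S^2 \times D^2
\]
by taking $\Psi$ to be the identity on $\nu^+(S)$ and, on $\nu^-(S)$, setting $\Psi(x, t, s) = (\Omega_{g(t,s)}(x), t, s)$ for a smooth real-valued function $g$ on the half-disk. Well-definedness across the twisted seam forces $g(t, 0) = -2\pi t$, and I would extend $g$ smoothly over the rest of the half-disk so that along the outer arc of $\nu^-(S)$ it varies monotonically from $0$ to $-2\pi$. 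The induced identification of $\partial(S^2 \times D^2) = S^2 \times S^1$ with $\partial(X \setminus \nu(S))$ is then the identity on the upper arc (coming from $\nu^+(S)$) and is the rotation $\Omega_{-g(t,s)}$ on the lower arc (coming from $\nu^-(S)$), whose restriction traces a full loop — the non-trivial generator of $\pi_1(SO(3)) \cong \Z/2$. By Remark \ref{rmk:gluck} this boundary map is isotopic to $\rho$, and hence $X'$ is the Gluck twist of $X$ along $S$.

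For the final statement, applying $\tau_S$ twice gives $(x, t) \mapsto (\Omega_{4\pi t}(x), t)$, whose associated loop in $SO(3)$ winds twice and is therefore null-homotopic; the same construction then produces a diffeomorphism $X'' \cong X$. Equivalently, this follows immediately from the first part together with the fact that $\rho^2$ is isotopic to the identity, as used in the proof of Lemma \ref{lemma:framing}. The main obstacle throughout is the careful bookkeeping needed to verify that the boundary map induced by $\Psi$ really is isotopic to $\rho$ rather than to the identity; this distinction ultimately reduces to the non-triviality of $\pi_1(SO(3))$ and the fact that the seam condition $g(t,0) = -2\pi t$ forces $g$ to wind an odd number of times around its target.
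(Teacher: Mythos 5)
Your proposal takes essentially the same approach as the paper: localize both operations to a tubular neighborhood $\nu(S) \cong S^2 \times D^2$ of $S$ in $X$, then use Remark \ref{rmk:gluck} to match the re-gluing data of the cut-and-paste operation with the Gluck-twist map $\rho$. You are more explicit in writing down the diffeomorphism $\Psi$ and in invoking $\pi_1(SO(3))$ to see that the boundary identification is the non-trivial one, but the underlying argument is the same.
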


\begin{proof}
 Identify a neighborhood of $S$ in $M_*$ with $S \times I$. Since the mapping torus is locally a product $M_* \times I_0$,
 we can identify a neighborhood of $S$ in $X$ with $S \times I \times I_0$. Performing a Gluck twist on $S$ corresponds to deleting
 $S \times I \times I_0$ and re-gluing it by $\rho$ along $S \times \partial(I \times I_0)$. By Remark \ref{rmk:gluck}, we can isotope $\rho$
 so that it is the identity map except on $S \times I \times \{1\}$, where we take $\Omega_{2\pi t}$. The result is clearly diffeomorphic to cutting
 along the fiber $M$
 and re-gluing by $\tau_S$,  so we get the first claim. The second claim follows from the first claim and the fact
 that performing a Gluck twist twice on a sphere preserves the original diffeomorphism type.
\end{proof}

\subsection{2--handle surgery}\label{section:handle}\ 

Next, we introduce an operation on 4--manifolds with boundary that will be instrumental in proving our main results.

\begin{definition}
Suppose a 4--manifold $X$ can be decomposed as $X = X_0 \cup_f h$, with a 2--handle $h$ attached to some 4--manifold with boundary $X_0$ with framing $f$. We say $X'$ is obtained from $X$ by \emph{2--handle surgery}
on $h$ with \emph{slope} $m$ if $X'=X_0\cup_{f'}h$ is formed by removing $h$ and re-attaching it with the same attaching circle but with framing $f'$, where $f'$ is obtained from $f$ by adding $m$ right-hand twists.
\end{definition}

In other words, 2--handle surgery is the process of cutting out and re-gluing a $B^4$ along $S^1 \times D^2$. We make the following simple observations.

\begin{lemma}\label{lemma:invariants}
Suppose $X'$ is obtained from $X$ by 2--handle surgery. Then $\pi_1 (X') \cong \pi_1 (X)$ and $H_*(X)\cong H_*(X')$.
 \end{lemma}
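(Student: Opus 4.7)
The plan is to observe that $X$ and $X'$ are homotopy equivalent (in fact, homotopy equivalent to the same 2-complex), and to derive both conclusions from this. The underlying principle is that the framing of a 2-handle records information about the normal bundle of its core, but the core itself is a 2-disk attached to $X_0$ along the attaching circle, and the attaching circle is unchanged by the framing.

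More precisely, I would first set notation: write $X = X_0 \cup_f h$ and $X' = X_0 \cup_{f'} h$, where the two gluing maps $\varphi, \varphi': S^1 \times D^2 \to \partial X_0$ agree on the attaching circle $S^1 \times \{0\}$ (they differ only by a self-diffeomorphism of $S^1\times D^2$ of the form $(\theta,z)\mapsto(\theta,\omega_{m\theta}(z))$, which fixes $S^1\times\{0\}$ pointwise). The 2-handle $h = D^2 \times D^2$ deformation retracts onto its core $D^2 \times \{0\}$, and this deformation retraction is compatible with either gluing. Hence both $X$ and $X'$ deformation retract onto the CW pair obtained from $X_0$ by attaching a single 2-cell along the common attaching circle $c$. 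In particular $X \simeq X'$.

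Given this homotopy equivalence, the conclusions are immediate. For $\pi_1$, Van Kampen applied to $X \simeq X_0 \cup_c D^2$ (and similarly for $X'$) yields
\[
\pi_1(X) \;\cong\; \pi_1(X_0)\,/\,\langle\!\langle [c] \rangle\!\rangle \;\cong\; \pi_1(X'),
\]
where $[c]$ denotes the class of the attaching circle in $\pi_1(X_0)$. For homology, the same homotopy equivalence gives $H_*(X) \cong H_*(X')$; alternatively, one can note that the cellular chain complex of the CW pair $(X,X_0)$ (and of $(X',X_0)$) has a single generator in degree $2$ whose boundary is $[c] \in H_1(X_0)$, independent of the framing, so the long exact sequence of the pair yields isomorphic groups.

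I do not expect a genuine obstacle here; the only point requiring a moment of care is to record that the framing change is realized by a self-diffeomorphism of $S^1\times D^2$ that restricts to the identity on the attaching circle $S^1\times\{0\}$, so that the two gluings agree on the $1$-skeleton of the handle. Once this is noted, everything else is formal.
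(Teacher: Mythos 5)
Your proof is correct, and it is essentially the same argument as the paper's, but packaged more cleanly: the paper reads off $\pi_1$ and $H_*$ from the handle decomposition and remarks that neither depends on the framing, whereas you promote this observation to the stronger statement that $X$ and $X'$ are both homotopy equivalent to $X_0 \cup_c D^2$ (via deformation retraction of the handle onto its core, which is unaffected by the framing change), from which both conclusions follow at once.
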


Note that it is possible to change the intersection form of $X$ via 2--handle surgery.

\begin{proof}
 We get a handle decomposition for $X$ by taking a handle decomposition for $X_0$ and adding $h$. A handle decomposition for a 4--manifold  allows one to read off a presentation of the fundamental group: each 1--handle gives a generator, and the attaching circle for each 2--handle  provides a relation. Now the framings for the 2--handles do not affect the relations, so removing $h$ removes a relation and re--attaching $h$ (with any framing) adds the same relation back.  Thus, $\pi_1 (X') \cong \pi_1 (X)$. Similarly, the homology groups can be computed in a simple way from a handle decomposition, and one can check that they don't depend on the framings of the 2--handles.
\end{proof}

The next lemma gives a condition for when a 2--handle surgery preserves the diffeomorphism type of the 4--manifold.  Recall that a properly embedded disk $D$ in a 4--manifold $X$ is said to be \emph{unknotted} if it is isotopic (relative to its boundary) into the boundary of $X$.

\begin{lemma} \label{lemma:unknotted_cocore}
Suppose $X$ contains a 2--handle $h$ whose cocore is unknotted in $X$. Then any 2--handle surgery on $h$ will preserve the diffeomorphism type of $X$.
\end{lemma}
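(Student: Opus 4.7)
My approach is to use the unknottedness of the cocore $C$ of $h$ to isotope $h$ into a standard ``collared'' position near $\partial X$, and then verify directly that changing the framing of a collared 2--handle does not alter the ambient diffeomorphism type of $X$.

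The plan is to first apply the ambient isotopy extension theorem to the isotopy carrying $C$ (rel $\partial C$) to a disk $C'\subset\partial X$. This gives a self-diffeomorphism $\Phi:X\to X$ carrying $h$ to a tubular neighborhood of $C'$ in $X$. Because $C'\subset\partial X$, this neighborhood sits as a ``half-tube'' inside the collar $\partial X\times[0,\epsilon)$: choose a normal $D^1$-direction to $C'$ in $\partial X$, and take $C'\times D^1\times[0,\epsilon)$. Up to this diffeomorphism, we may therefore assume from the start that $h$ and a neighborhood of its attaching region together form a standard 4--ball $B=C'\times D^1\times[0,2\epsilon)\cong D^4$ inside $X$, with $B\cap\partial X$ a standard 3--ball neighborhood of $C'$ in $\partial X$.

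Next I would observe that the 2--handle surgery on $h$ is supported in $B$: it only modifies how $h$ is reglued to $\overline{X\setminus h}$, and the attaching region of $h$ lies inside $B$. So it suffices to produce a self-diffeomorphism $\Psi$ of $B$ which realizes the framing change (i.e.\ $m$ Dehn twists along the meridian of the attaching solid torus) and which restricts to the identity on $\partial B\setminus\mathrm{int}(B\cap\partial X)$, since then $\Psi$ extends by the identity over $\overline{X\setminus B}$ to give a global diffeomorphism from the surgered manifold back to $X$. The existence of such a $\Psi$ is visible in the product model: the curve $\partial C'\subset\partial X$ has a product neighborhood $\partial C'\times D^2\subset\partial X\times[0,\epsilon)$ contained in $B$, and one can implement the framing twist by a rotation of the $D^1\times[0,\epsilon)$ factors (through angle $2\pi m$) along $\partial C'$, damped to the identity near $\partial B\setminus(B\cap\partial X)$.

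The main obstacle is formalizing this last step: one must check carefully that the rotation can indeed be damped so that it is the identity on the ``outer'' portion of $\partial B$ (the part we glue back to $\overline{X\setminus B}$) while still producing exactly the intended framing change on the attaching solid torus of $h$. This is essentially the standard fact that a Dehn twist along the meridian of a solid torus extends over the solid torus, applied fiberwise in the collar direction; the bounding disk for that meridian in $B$ is supplied precisely by the unknottedness of $C$, which lets us push the cocore into $\partial X$ and hence identifies both sides of the attaching torus with disks. Once $\Psi$ is in hand, gluing it by the identity across $\partial B\setminus(B\cap\partial X)$ yields the desired diffeomorphism of $X$ with its 2--handle surgery, completing the proof.
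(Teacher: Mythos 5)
The paper's proof is a one-step handle-cancellation argument: removing a tubular neighborhood of the unknotted cocore of $h$ is the same as attaching a 1--handle to $X$, and re-attaching $h$ (with any framing) cancels that 1--handle because the attaching circle meets the belt sphere geometrically once, so $X'\cong X$. Your proposal takes a genuinely different route, via an ambient isotopy pushing $h$ into a collar of $\partial X$ followed by an explicit ``untwisting'' diffeomorphism supported in a 4--ball near the boundary. This strategy is plausible in outline, and the localization step is sound: since $E\cap\nu D=\emptyset$, the surgery is supported in $B$, so it suffices to produce a diffeomorphism $B'\to B$ (between the surgered and unsurgered balls) that is the identity on $\partial B\setminus \mathrm{int}(B\cap\partial X)$. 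But this is precisely the step your proposal does not carry out, and the sketch given does not succeed.

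The issue is the ``damped rotation.'' The self-map $\sigma^m$ of the attaching region $\partial D^2\times D^2$ realizing the framing change restricts on $\partial D^2\times\partial D^2$ to $m$ Dehn twists; the identity on the belt region $D^2\times\partial D^2$ does \emph{not} match this along $T^2=\partial D^2\times\partial D^2$, so one cannot simply damp to the identity on the complementary solid torus of $\partial B$. A Dehn twist of $T^2$ extends over the solid torus whose core it is \emph{parallel} to, not the one it is a \emph{longitude} of, so the extension you want must in fact use the disk supplied by the unknottedness of $C$ --- but ``both sides of the attaching torus bound disks'' is not literally true in $\partial B$, and your description of where the twist is absorbed (``rotation of the $D^1\times[0,\epsilon)$ factors along $\partial C'$'') does not identify the disk or explain why the restriction to the attaching region is exactly $\sigma^m$. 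A rigorous version would need to first show $\partial B'\cong\partial B\cong S^3$ rel the interior 3--ball (which uses that $\partial C$ is unknotted in $B\cap\partial X$, so $(-1/m)$--surgery on it returns a 3--ball), then invoke the connectivity of $\mathrm{Diff}(B^3,\partial)$ and Cerf's theorem to extend over $D^4$. None of this is present. By comparison, the paper's handle-cancellation argument bypasses the explicit diffeomorphism entirely and is both shorter and more robust; I'd encourage you to look at that proof and notice that ``unknotted cocore'' is exactly the hypothesis needed for the complement $\overline{X\setminus h}$ to be $X$ plus a 1--handle that $h$ then cancels, independently of framing.
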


\begin{proof}
Removing $h$ is equivalent to removing a neighborhood of the cocore $D$. If $D$ is unknotted, then this can be thought of as adding a 1--handle to $X$.
We form $X'$ by attaching a 2--handle along the former attaching circle, which will intersect the belt sphere of the 1--handle geometrically once. Therefore, these two handles will cancel (regardless of framing).  It follows that we have not changed $X$, so $X'$ is diffeomorphic to $X$. In other words, $D$ being unknotted is equivalent to the existence of a 1--handle that $h$ cancels, and the cancellation does not depend on the framing of $h$.
 \end{proof}
 
Next, we consider how the boundary of a 4--manifold changes when doing 2--handle surgery. Recall that for $X = X_0 \cup h$, $\partial X$  is obtained from $\partial X_0$ by performing integral surgery on the attaching circle $K$ of $h$, with the surgery coefficient given by the framing.  The belt sphere of $h$ in $\partial X$ is then the dual knot $K'$ of $K$. If $X'$ is the result of the 2--handle surgery, then $\partial X'$ is obtained from $\partial X$ by doing surgery on $K'$, which can be seen as the composition of two surgeries: first do the `dual surgery' from $\partial X$ back to $\partial X_0$, then do another surgery (corresponding to the new framing of $h$) from $\partial X_0$ to $\partial X'$. Note that the surgery from
$\partial X$ to $\partial X'$ won't be integral, in general.
 
\begin{example}\label{ex:Dehn_surgery}
Suppose that $X=B^4$, and $X_0$ is the exterior of a slice disk for a knot $K\subset S^3=\partial B^4$.  Then, $\partial X_0$ is the 3--manifold obtained by 0--surgery on $K$. 
 We see that $X$ is obtained from $X_0$ by attaching a 0--framed 2--handle $h$ along $K'\subset\partial X_0$, where $K'$ is the dual knot to $K$.  Now, $X'$ is formed by $2$--handle surgery on $h$ with slope $m$ if $h$ is attached with framing $m$ instead of framing 0, so $\partial X'$ is $m$--Dehn surgery on $K'$ in $\partial X_0$.  From this viewpoint, we see that $\partial X'$ is obtained by doing $(-1/m)$--Dehn surgery on $K$ in $S^3$. 
\end{example}

Now we show that changing the monodromy of a 4--dimensional mapping torus with boundary by twisting along a disk that is properly embedded in a fiber corresponds to performing a 2--handle surgery on the total space. 

\begin{proposition} \label{prop:disk_twist}
Let $H$ be a compact 3--manifold with boundary, and consider the mapping torus $X = H \times_\phi S^1$.  Let $D$ be a disk that is properly embedded in a fiber $H_*$. Let $X' = H \times_{\Phi \circ \tau_D} S^1$ be the mapping torus obtained by cutting $X$ along $H_*$ and re-gluing with the diffeomorphism $\tau_D:H_*\to H_*$. Then, $X'$ can be obtained from $X$ by performing a 2--handle surgery on a handle $h$ in $X$  where the cocore of $h$ is $D$.
\end{proposition}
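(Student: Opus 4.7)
My plan is to mimic the structure of the proof of Proposition \ref{prop:sphere_twist}, adapted to the disk setting. The key observation is that $\tau_D$ is supported in the product neighborhood $D \times I$ of $D$ in $H$, so its effect on the mapping torus is localized to a 4--ball neighborhood of $D$ in $X$. That 4--ball will play the role of the 2--handle $h$, and the change in monodromy will translate into a framing change on $h$.

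First, I would locate the 2--handle. The product neighborhood $D \times I$ of $D$ in $H$ combines with a transverse arc $I_0 \subset S^1$ (using the local product structure $H_* \times I_0$ of the mapping torus near the fiber $H_*$) to produce a neighborhood of $D$ in $X$ identified with $D \times I \times I_0 \cong D^2 \times D^2$. I would take this 4--ball to be $h$, with $D$ as the cocore. The attaching region is then $\partial(I \times I_0) \times D \cong S^1 \times D^2$, glued to $\partial X_0$, where $X_0 = \overline{X \setminus h}$.

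Next, since $\tau_D$ is the identity outside $D \times I$, the cut-and-reglue operation that produces $X'$ from $X$ affects only this neighborhood. Hence $\overline{X \setminus h}$ and $\overline{X' \setminus h}$ are canonically identified as 4--manifolds with boundary, and the two mapping tori differ only in how $h$ is re-attached. To conclude, I would compare the two attaching maps of $h \cong D \times (I \times I_0)$: the $\phi$-gluing versus the $(\phi \circ \tau_D)$-gluing. As one traverses the $S^1$ factor of the mapping torus, the extra $\tau_D$ performs a full rotation of the $D$-factor, which on the attaching region $\partial(I \times I_0) \times D$ restricts to a Dehn twist along the $S^1 = \partial(I \times I_0)$ factor. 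By the definition of 2--handle surgery, this is a framing change of $\pm 1$, so $X'$ is obtained from $X$ by 2--handle surgery on $h$ of slope $\pm 1$.

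The main obstacle will be making the framing comparison precise. One must track the product structures and orientations carefully enough to certify that a single application of $\tau_D$ corresponds to exactly one full twist of the attaching framing, and to pin down the sign (which depends on conventions and the chosen orientation of $S^1$). Concretely, after reducing to the local model $D^2 \times D^2$, one should be able to isotope the twist so that it is supported in a single half of $I_0$, mirroring Remark \ref{rmk:gluck} in the sphere case, which makes the identification with standard 2--handle surgery transparent. This step parallels the framing count for Gluck twists in the proof of Proposition \ref{prop:sphere_twist}.
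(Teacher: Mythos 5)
Your proposal is correct and follows essentially the same route as the paper: you identify the same neighborhood $N = D \times I \times I_0$ as the 2--handle $h$ with cocore $D$, observe that $\tau_D$ is supported there so the complement is unchanged, and interpret the change of gluing as a framing change on $h$, just as the paper's proof (and its Gluck-twist analogue, Proposition \ref{prop:sphere_twist}) does. Your extra remark that a single application of $\tau_D$ corresponds to a slope $\pm 1$ surgery is a correct refinement the paper leaves implicit.
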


\begin{proof}
Similarly to Proposition \ref{prop:sphere_twist}, we identify a neighborhood of $D$ in $H_*$ with $D \times I$ and a neighborhood of $D$ in $X$ as $N = D \times I \times I_0$. Then $N$ is diffeomorphic to $B^4$, and $N \cap (\overline{X \setminus N}) = D \times \partial (I \times I_0)$ is a solid torus. Thus, we can view $N$ as a 2--handle attached to $\overline{X \setminus N}$ with cocore $D \times \{0\} \times \{0\}$. Cutting out $N$ and re-gluing it is then a 2--handle surgery.  We choose the gluing map to be the identity map except on $D \times I \times \{1\}$, where we take $\omega_{2\pi t}$.  This is clearly diffeomorphic to cutting along the fiber and re-gluing by $\tau_D$ to obtain $X'$.
\end{proof}

\section{Main results}\label{section:results}

Now we can apply the techniques from the previous section to prove our main results. Here we give slightly more detailed statements of the theorems described in the introduction.


\begin{reptheorem}{thm:fibered_disk_knot}
Let $D_0\subset B^4$ be a fibered disk-knot with fiber $H$ and monodromy $\phi$, and let $E\subset H$ be an essential, properly embedded disk that is unknotted in $B^4$. Then, the result of twisting $m$ times along $E$ is a new fibered disk-knot $D_m\subset B^4$ with monodromy $\phi_m=\phi\circ\tau_E^m$. Furthermore,
\begin{enumerate}
\item $K_m=\partial D_m$ is a fibered knot in $S^3$ with monodromy $\varphi_m=\varphi_0\circ\tau_c^m$, where $c=\partial E$ is a Stallings curve;
\item the collection $\{D_m\}_{m\in\Z}$ contains infinitely many pairwise inequivalent fibered disk-knots, where $D_0=D$; and
\item the collection $\{\mathcal DD_m\}_{m\in\Z}$ of fibered 2--knots obtained by doubling contains at most two pairwise inequivalent elements.
\end{enumerate}
If in addition the fiber $H$ is the handlebody $H_g$, then the family of disk exteriors $\{\overline{B^4\setminus\nu{D_m}}\}_{m\in\Z}$ will all be homotopy equivalent.
\end{reptheorem}


\begin{proof}
Changing the monodromy by twisting along $E$ is equivalent to performing a 2--handle surgery, by Proposition \ref{prop:disk_twist}, where the cocore of the 2--handle is $E$. Since $E$ is unknotted, Lemma \ref{lemma:unknotted_cocore} states that the diffeomorphism type of the total space doesn't change, and so the result is a fibered disk-knot in $B^4$. In fact there is a small subtlety: we are using the fact that performing the 2--handle surgery in the disk exterior $H_g \times_\phi S^1$ and then filling back in the the disk $D$ is equivalent to performing the 2--handle surgery in $H_g \times_\phi S^ 1\cup \nu D = B^4$. However, the order of these operations is insignificant,  because $E \cap \nu D = \emptyset$.

Because $E$ is unknotted, it can be isotoped to a disk $E'$ lying in $S^3=\partial B^4$ such that $c=\partial E'$ lies on a fiber surface $F_0$ for $K_0=\partial D_0$. Furthermore, the framing on $c$ induced by $F$ is zero, since $c$ bounds a disk in $H_g$. Therefore, $E'$ is a Stallings disk for $K_0=\partial D_0$. Since twisting along a disk restricts to a Dehn twist on the boundary, we see that we are changing the monodromy of the boundary surface bundle by $\tau_E^m\vert_{\partial H}  = \tau_c^m$.  This settles part (1).


Part (2)  follows from the fact that infinitely many of the boundary knots $\partial D_m$ are distinct.   For example, the collection $\{K_{k(9g-2)}\}_{k\in\Z}$ contains pairwise distinct knots, by  Lemma \ref{lemma:commutator}.

Part (3) follows from Remark \ref{rmk:disk_doubling} and Proposition \ref{prop:sphere_twist}.


Finally, if $H=H_g$, then we will show that $Z_m=H_g\times_{\phi\circ\tau_E^m} S^1$ is a $K(G,1)$, where $G\cong\pi_1(Z_m)$ is independent of $m$, by Lemma \ref{lemma:invariants}.  By the long exact sequence of a fibration, and since the base space $S^1$ satisfies $\pi_n(S^1)=0$ for $n>1$, we see that $\pi_n(Z_m)\cong\pi_n(H_g)=0$ for all $n>1$.  Thus, the $Z_m$ are homotopy equivalent for all $m$.
\end{proof}



\begin{remark}

In many cases, it may be that $\mathcal DD_m=\mathcal DD_n$ for all $m,n\in\Z$.  For example, when we consider spinning fibered 1--knots below, we will see examples where the resulting collection of 2--knots contains a \emph{unique} isotopy class.
\end{remark}




In regards to the last statement of the theorem we note that Gordon and Sumners \cite{gordon-sumners} gave examples of disks whose exteriors have the homotopy type of a circle and double to give the unknotted 2--sphere.

Now given a fibered 2--knot $\mathcal S \subset S^4$ with fiber $\#_g S^1 \times S^2$ (i.e. a fibered homotopy-ribbon 2--knot by Cochran), we consider the ways in which it can be decomposed as the \emph{double} of a fibered disk in a contractible 4--manifold. We have already seen in Theorem \ref{thm:fibered_disk_knot} that it is possible for a 2--knot to be the double of infinitely many distinct disk-knots in $B^4$, but this is a somewhat special situation. We next prove a sort of converse result, which is more general but comes with the trade-off that we can no longer guarantee that the fibered disks lie in $B^4$. Again, here we give a more detailed statement than in the introduction.

\begin{reptheorem}{thm:halving}
Let $\mathcal S$ be a non-trivial fibered 2--knot in $S^4$ with fiber $(\#_g S^1 \times S^2)^\circ$. Then $(S^4, \mathcal S)$ can be expressed as the double of infinitely many pairs $(W_m, D_m)$, where 
\begin{enumerate}
\item $W_m$ is a contractible 4--manifold;
\item $D_m$ is a fibered homotopy-ribbon disk-knot in $W_m$;
\item the boundaries $Y_m = \partial W_m$ are all related by Dehn filling on a common 3--manifold; and
\item infinitely many of the $Y_m$ (and, therefore, the corresponding $W_m$) are non-diffeomorphic.
\end{enumerate}
\end{reptheorem}

We remark that the theorem holds in a slightly more general setting.  One could consider $\mathcal S$ to be in a homotopy 4--sphere, for example, yet draw the same conclusions.


To prove Theorem \ref{thm:halving}, we must first record a fact about self-diffeomorphisms of $M_g=\#_g S^1 \times S^2$.

\begin{proposition}\label{prop:mono}
Let $\Phi : M_g \rightarrow M_g$ be an orientation-preserving diffeomorphism. Then $\Phi$ can be isotoped so that it preserves the standard Heegaard splitting of $M_g$. Furthermore, $\Phi$ is isotopic to the double of a diffeomorphism $\phi : H_g \rightarrow H_g$.
\end{proposition}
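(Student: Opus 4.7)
The plan is to reduce the statement to a rel-boundary diffeomorphism of a punctured 3--sphere, where Cerf's theorem applies.

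First, I would apply $\Phi$ to the standard Heegaard surface $\Sigma_g\subset M_g$; the image $\Phi(\Sigma_g)$ is another genus-$g$ Heegaard surface. By the uniqueness up to ambient isotopy of genus-$g$ Heegaard splittings of $\#_gS^1\times S^2$ (a consequence of Waldhausen's theorem for $S^3$ together with Haken's lemma on reducing spheres applied inductively to the essential sphere system of $M_g$), there is an ambient isotopy of $M_g$ carrying $\Phi(\Sigma_g)$ back to $\Sigma_g$. After composing with it, I may assume $\Phi(\Sigma_g)=\Sigma_g$. The involution of $M_g=\mathcal{D}H_g$ that swaps the two handlebodies while fixing $\Sigma_g$ pointwise is orientation-reversing --- on the model $S^1\times S^2=S^1\times(D^2\cup_\partial D^2)$ it is $\id_{S^1}$ times reflection across the equator of $S^2$, and this extends by boundary-connect-summing --- so, because $\Phi$ is orientation-preserving, $\Phi$ must send each handlebody $H_g^\pm$ to itself. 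This yields the first conclusion of the proposition.

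Setting $\phi=\Phi|_{H_g^+}$ and letting $\Psi=\mathcal{D}\phi$ be the doubled diffeomorphism, the composition $\tilde\beta=\Phi\circ\Psi^{-1}$ is the identity on $H_g^+$ and restricts on $H_g^-$ to some diffeomorphism $\beta$ that fixes $\partial H_g^-$ pointwise. The remaining task, and the main obstacle, is to isotope $\tilde\beta$ to the identity on $M_g$; once that is done, $\Phi$ and $\Psi=\mathcal{D}\phi$ are isotopic and the second conclusion follows.

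To carry out this last step, I would fix a complete system of meridian disks $D_1^-,\ldots,D_g^-$ in $H_g^-$ whose doubles form a system of essential 2--spheres $S_1,\ldots,S_g\subset M_g$ cutting $M_g$ into a copy of $S^3$ minus $2g$ open balls. Because any two properly embedded non-separating disks in a handlebody sharing the same boundary are isotopic rel boundary (a standard innermost-arc argument using $\partial$-irreducibility of $H_g$), one can inductively isotope $\tilde\beta$ rel $H_g^+$ so that $\tilde\beta(D_i^-)=D_i^-$ for each $i$; Alexander's trick applied to $\tilde\beta|_{D_i^-}$ then arranges that $\tilde\beta$ is the identity on all of $\bigcup S_i$. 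Cutting along these spheres reduces $\tilde\beta$ to a rel-boundary diffeomorphism of a punctured 3--sphere, which is isotopic to the identity by Cerf's theorem ($\pi_0\text{Diff}(D^3,\partial)=1$). This isotopy extends back across the spheres to give the desired isotopy on $M_g$.
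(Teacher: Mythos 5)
Your proof takes a genuinely different route from the paper, which simply invokes Montesinos' explicit generators for $\mathcal{M}(M_g)$ and observes that each orientation-preserving generator visibly preserves the splitting and doubles a handlebody map. Your plan --- isotope to preserve the Heegaard surface via uniqueness of genus-$g$ splittings of $\#_g S^1 \times S^2$, peel off a double, and reduce the residual map to a rel-boundary diffeomorphism of $B^3$ --- is a nice structural argument, and the tail end of it (isotoping $\tilde\beta$ rel $H_g^+$ to fix a complete meridian system, then cutting along the doubled spheres so that what remains is supported in the complementary ball $B^3_-$, and finishing with Cerf) is correct as stated, since $\tilde\beta$ is already the identity on $H_g^+$ and hence on the cut-open copy $B^3_+$.

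There is, however, a genuine gap in the step that concludes $\Phi$ must carry each handlebody to itself. You argue: the swap $\sigma$ fixing $\Sigma_g$ pointwise is orientation-reversing, and $\Phi$ is orientation-preserving, hence $\Phi$ cannot swap the sides. This does not follow. If $\Phi$ swaps the sides, then $\sigma^{-1}\circ\Phi$ preserves each side and is orientation-reversing, which is in no way contradictory; there are plenty of orientation-reversing side-preserving diffeomorphisms. Concretely, on $S^1\times S^2$ the map $(\theta,x)\mapsto(\theta,R_\pi(x))$, where $R_\pi$ is rotation of $S^2$ through $\pi$ about an equatorial axis, is orientation-preserving, preserves the equatorial Heegaard torus, and swaps the two solid tori. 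Boundary-connect-summing such rotations shows that for every $g$ there are orientation-preserving diffeomorphisms of $M_g$ preserving $\Sigma_g$ and exchanging $H_g^+$ and $H_g^-$. The good news is that the rotation just described is isotopic to the identity (via $R_t$, $t\in[0,\pi]$), so if your $\Phi$ does swap the sides you can compose with such an isotopically-trivial swap and reduce to the side-preserving case; with that extra paragraph the rest of your argument goes through.
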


\begin{proof}
Montesinos \cite{montesinos} gives representatives for generators of the mapping class group $\mathcal M(M_g)$, and, upon inspection, we see that each of the orientation-preserving representatives satisfy the above properties. The result follows, since any orientation-preserving diffeomorphism of $M_g$ is isotopic to a composition of Montesinos' representatives.

\end{proof}

\begin{remark}
Given such a $\Phi$, the description of $\Phi$ as the double of a handlebody diffeomorphism $\phi$ is not unique. Indeed we can alter $\phi$ by twisting twice along any disk and the doubled map $\Phi$ will be unaffected (up to isotopy), by Proposition \ref{prop:sphere_twist}.
\end{remark}

Given a fibered 2--knot $\mathcal{S} \subset S^4$ with fiber $M_g^\circ$ and monodromy $\Phi$, we can apply Proposition \ref{prop:mono} to obtain a (non-unique) handlebody bundle $H_g \times_\phi S^1$ which doubles to the exterior of $\mathcal{S}$, $E_\mathcal{S} = M_g^\circ \times_\Phi S^1$. More precisely, we double the handlebody bundle, except along a disk $D_0$ that is fixed by $\phi$.

Recall from our comments on the Gluck twist that $S^4$ is recovered from $E_\mathcal{S}$ by attaching a 0--framed 2--handle $h$ and a 4--handle. The attaching circle of $h$ is $\{pt\}\times S^1$, and it can be  isotoped to lie on the boundary of the sub-bundle $H_g \times_\phi S^1$ (in fact, we can choose the attaching region to be $D_0\times S^1$). Let $W_0 = (H_g \times_\phi S^1) \cup h$ and observe that $S^4 = \mathcal{D}W_0$. This is most easily seen by noticing that the complement $E_\mathcal S$ is completed to $S^4$ by gluing in a $S^2\times D^2$, where we think of the cocore $D$ of $h$ as the southern hemisphere and the doubled copy of $D$ as the northern hemisphere. Then the proof of Theorem \ref{thm:halving} will be completed by proving the following claim.

\begin{claim}
Let $W_m$ be the manifold obtained by performing 2--handle surgery on $h$ with slope $m$, and let $D_m$ be the cocore of the re-glued $h$. Then, $\mathcal{D}W_m = S^4$, and $\mathcal{D} D_m = \mathcal{S}$. Furthermore, the $W_m$ are contractible 4--manifolds, and infinitely many of the $W_m$ are distinct.
\end{claim}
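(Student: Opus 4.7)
My plan is to handle the four parts of the claim in sequence, relying on the monodromy–surgery dictionary developed in Section \ref{section:mono}. The equality $\mathcal{D}(W_m, D_m) = (S^4, \mathcal{S})$ follows by combining three facts: Proposition \ref{prop:disk_twist} says that performing 2--handle surgery on $h$ with slope $m$ is equivalent to changing the monodromy of the sub-bundle $H_g \times_\phi S^1$ by twisting $m$ times along the cocore disk $D$; Remark \ref{rmk:disk_doubling} converts this, upon doubling, into twisting $m$ times along the sphere $\mathcal{S}=\mathcal{D}D$ in $M_g^\circ$; and Proposition \ref{prop:sphere_twist} reinterprets the doubled operation as $m$ successive Gluck twists on $\mathcal{S}$ in $S^4$. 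Since $\mathcal{S}$ has fiber $M_g^\circ$, Remark \ref{rmk:trivial_gluck} guarantees that every Gluck twist preserves $(S^4, \mathcal{S})$, so $\mathcal{D}(W_m, D_m) = (S^4, \mathcal{S})$ for every $m$.

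To prove $W_m$ is contractible I would use Lemma \ref{lemma:invariants} to reduce to $W_0$, and handle $\pi_1$ and $H_*$ separately. From the natural handle decomposition,
$$\pi_1(W_0) = \pi_1(H_g \times_\phi S^1)/\langle\langle t \rangle\rangle = F_g/\langle\langle x_i\,\phi_*(x_i)^{-1}\rangle\rangle,$$
where $t$ is the stable letter represented by $\{pt\}\times S^1$ and $x_1,\ldots,x_g$ generate $\pi_1(H_g)$. Under the canonical isomorphism $\pi_1(H_g) \cong \pi_1(M_g) \cong F_g$ induced by the inclusion $H_g \hookrightarrow M_g$, the map $\phi_*$ coincides with $\Phi_*$, since $\Phi=\mathcal{D}\phi$ restricts to $\phi$ on $H_g$. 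The meridian of $\mathcal{S}$ in $\pi_1(E_\mathcal{S}) = \langle x_i, t \mid tx_it^{-1}=\Phi_*(x_i)\rangle$ is represented by $t$ and normally generates (because $\pi_1(S^4)=1$), so the parallel quotient $F_g/\langle\langle x_i\,\Phi_*(x_i)^{-1}\rangle\rangle$ is trivial; hence so is $\pi_1(W_0)$. For the homology, I would combine Hurewicz, Mayer--Vietoris for $S^4 = W_0 \cup_{\partial W_0} W_0$, and Poincar\'e duality on the closed 3--manifold $\partial W_0$: successively, $H_1(W_0)=0$, then $H_1(\partial W_0)=0$, then $H_2(\partial W_0)=0$, then $H_2(W_0)=0$. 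The vanishing of $H_i(W_0)$ for $i\geq 3$ is automatic from the handle decomposition guaranteed by Lemma \ref{lemma:handlebody_bundle}. A Whitehead argument then delivers contractibility.

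For distinctness, by Section \ref{section:handle} and Example \ref{ex:Dehn_surgery}, each $\partial W_m$ is obtained from the fixed cusped 3--manifold $\partial(H_g \times_\phi S^1) \setminus \nu(\{pt\}\times S^1)$ by Dehn filling of slope $-1/m$. A standard Dehn surgery argument (for instance, detecting variation in $\pi_1(\partial W_m)$ or its first homology, or invoking Thurston's hyperbolic Dehn surgery theorem when the cusped exterior is hyperbolic) should yield infinitely many pairwise non-diffeomorphic $\partial W_m$, and since diffeomorphic 4--manifolds have diffeomorphic boundaries, the $W_m$ themselves are then distinct.

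The main obstacle will be the $\pi_1$ computation: one must identify the meridian of $\mathcal{S}$ in $\pi_1(E_\mathcal{S})$ with the stable letter $t$ so that killing it reproduces the presentation arising naturally in $\pi_1(W_0)$, and one must carefully track the identification $\phi_*=\Phi_*$ across the inclusion $H_g \hookrightarrow M_g$. A secondary subtlety is making the distinctness of the $\partial W_m$ fully effective, rather than appealing to hyperbolic geometry as a black box.
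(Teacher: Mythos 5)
Your argument for $\mathcal{D}(W_m,D_m)=(S^4,\mathcal S)$ and for the contractibility of $W_m$ follows essentially the same route as the paper: the monodromy/surgery dictionary plus Remark \ref{rmk:trivial_gluck} for the doubling statement, the identification $\phi_*=\Phi_*$ to get $\pi_1(W_m)=1$ via the parallel HNN presentations, and Whitehead to conclude contractibility. Your Mayer--Vietoris/Poincar\'e duality computation of $H_*(W_0)$ is a valid and slightly more explicit variant of the paper's ``one can calculate that $W_m$ has trivial homology.'' These parts are fine.

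The distinctness of the $W_m$ is where your proposal has a genuine gap, and it is one you half-notice but do not resolve. First, your suggestion to detect variation in $H_1(\partial W_m)$ cannot work: each $W_m$ is contractible, so every $\partial W_m$ is an integral homology $3$--sphere and $H_1(\partial W_m)=0$ for all $m$. Second, Thurston's hyperbolic Dehn surgery theorem only applies when the cusped manifold $Y$ is hyperbolic, and nothing so far rules out $Y$ being Seifert fibered or toroidal; a knot exterior fibering over $S^1$ need not be hyperbolic. The paper closes this gap with a case analysis on the geometric type of $Y$: if $Y$ is Seifert fibered with boundary fiber slope $a/b$, then for $a\neq0$ the filling $Y(-1/m)$ acquires an exceptional fiber of multiplicity $\Delta(a/b,-1/m)=am-b$, which distinguishes the $Y_m$, while $a=0$ would make the $0$--filling reducible, contradicting that it is a nontrivial closed surface bundle; if $Y$ is hyperbolic, Thurston's volume argument applies; and if $Y$ is toroidal, one cuts along essential tori and runs the argument on the atoroidal piece containing $\partial Y$. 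You would need to supply this trichotomy (or an equivalent invariant-based argument) to complete the proof; as written, the final step is only a gesture toward what is needed.
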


\begin{proof}
The fact that $\mathcal{D}W_m = S^4$ and $\mathcal{D} D_m = \mathcal{S}$ follows from Lemma \ref{lemma:framing} and Remark \ref{rmk:trivial_gluck}: the pairs $(\mathcal DW_m, \mathcal DD_m)$ are related by Gluck twists on $\mathcal DD_m$, and Gluck twists on 2--knots bounding $M_g^\circ$ preserve both the 4--manifold and the 2--knot. Next, notice that $\pi_1(H_g \times_\phi S^1)\cong\pi_1(M_g^\circ \times_\Phi S^1)$, since $\Phi_*$ and $\phi_*$ are identical as automorphisms of the free group on $g$ generators.  It follows that the HNN extensions presenting these groups will be identical. Since $\pi_1(M_g\times_\Phi S^1\cup h)\cong 1$, it follows that $\pi_1(H_g\times_\phi S^1\cup h)\cong 1$.

One can calculate that $W_m$ has trivial homology, and it then follows from Whitehead's theorem that $W_m$ is contractible. (The inclusion of a point into $W_m$ is a homology equivalence; hence, a homotopy equivalence.)

The boundaries $Y_m=\partial W_m$ are all related by Dehn filling on a common 3--manifold $Y$, as discussed in Example \ref{ex:Dehn_surgery}.  More precisely, we have that $Y_m=Y(-1/m)$.

First suppose that $Y$ is Seifert fibered and that the induced slope of the fibering of the boundary is $a/b$.  If $a\not=0$, then the $(-1/m)$--filling introduces a new exceptional fiber of multiplicity $\Delta(a/b,-1/m)=am-b$, and the spaces are thusly distinguished.  If $a=0$, then $0$--filling is reducible \cite{seifert}; however, this manifold is also a nontrivial closed surface bundle, a contradiction.

If $Y$ is hyperbolic, then we have $vol(Y(-1/m))<vol(Y)$ for all $m$ and, by \cite{thurston}, $$\lim_{m\to\infty}vol(Y(-1/m))=vol(Y).$$  It follows that there is an infinite sequence $\{m_i\}_{i\in\N}$ such that $vol(Y_{m_1})<vol(Y_{m_2})<vol(Y_{m_3})<\cdots$.  Therefore, infinitely many of the $Y_m$ are distinct.  If $Y$ is toroidal, then we can cut $Y$ along all essential tori, and repeat the above argument on the atoroidal piece containing $\partial Y$.
\end{proof}
 
\begin{question}
Is it possible that $Y_{m'}\cong Y_{m}$ for some $m'\not=m$?  This would be an example of a cosmetic surgery on a fibered homotopy-ribbon knot in a $\Z$--homology 3--sphere.
\end{question}



Quite a few choices were made in the preceding construction, and this raises a number of questions. An obvious one is whether we can always arrange for one of the $W_m$ to be the standard 4--ball.  We can phrase the question as follows.

\begin{question}
Is every fibered homotopy-ribbon 2--knot in $S^4$ the double of a fibered homotopy-ribbon disk-knot in $B^4$?
\end{question}

A result of Levine \cite{levine:disk_knots} states that the double of any disk-knot is doubly slice, so to answer the above question negatively, it would suffice to find a fibered homotopy-ribbon 2--knot in $S^4$ that is not doubly slice. If we generalize to arbitrary fibers then we can observe that not all fibered 2--knots are doubles of fibered disks. This follows simply because there exist 2--knots that are fibered by 3--manifolds that are themselves not doubles (see \cite{zeeman}).

We introduce the term \emph{halving} to describe the process of going from a fibered homotopy-ribbon 2--knot $\mathcal S\subset S^4$ to one of the cross-sectional fibered homotopy-ribbon 1--knots produced by Theorem \ref{thm:halving}.  This raises the question of which 1--knots can result from this process for a given 2--knot.

\begin{definition}
Given a 2--knot $\mathcal S\subset S^4$, we call 1--knot $K$ a \emph{symmetric equator} of $\mathcal S$ if $K$ can be obtained from $\mathcal S$ by halving.  In other words, $\mathcal S$ is the double along $K$ of a disk in a contractible 4--manifold.
\end{definition}

From this point of view, we have the following corollary to Theorem \ref{thm:halving}.

\begin{repcorollary}{coro:symmetric_equators}
Any nontrivial, fibered homotopy-ribbon 2--knot has infinitely many distinct fibered symmetric equators.
\end{repcorollary}


\section{Spinning fibered 1-knots}\label{section:spinning}

Here we give a method to produce many examples of fibered ribbon disks and fibered ribbon 2--knots. Recall that the \emph{spin} of a manifold $X$ is given by $\mathcal S(X)=\partial(X^\circ\times D^2)$.  Equivalently, we can view the spin as a double, $\mathcal S(K)\cong \mathcal D(X^\circ\times I)$, as follows.
$$\mathcal D(X^\circ\times I) = \partial(X^\circ\times I\times I)\cong\partial(X^\circ\times D^2)$$

Notice that $\mathcal S(S^n)\cong S^{n+1}$, so the the spin of a knot $(S^3, K)$ is a 2--knot $(S^4, \mathcal S(K))$, called the \emph{spin} of $K$.  Also, if we let $(B^3, K^\circ)=(S^3,K)^\circ$ be the punctured pair, then $(B^3,K^\circ)\times I=(B^4, D_K)$, where $D_K$ is a ribbon disk for $K\#(-K)$, which we sometimes call the \emph{half-spin} of $K$.  The justification of this terminology is that $\mathcal D(B^4, D_K)\cong (S^4, \mathcal S(K))$.

Now, if $(S^3, K)$ is a fibered knot, then $K\#(-K)$ is a fibered ribbon knot, $(B^4, D_K)$ is a fibered ribbon disk for $K\#(-K)$, and $(S^4, \mathcal S(K))$ is a fibered ribbon 2--knot.  Suppose that $K$ has monodromy $\varphi:\Sigma_g^\circ\to\Sigma_g^\circ$.  Then, $(B^4, D_K)\cong (S^3,K)^\circ\times I$ is clearly fibered with fibers $H_{2g}\cong \Sigma_g^\circ\times I$ and monodromy $\phi=\varphi\times\id$.  It is clear that the fibration restricts to the boundary to give a fibration of $(S^3, K\#(-K))$, where the fibers are $\Sigma_g^\circ\natural(-\Sigma_g^\circ)$ and the monodromy is $\varphi\natural(-\varphi)$.  Finally, if we view $(S^4, \mathcal S(K))$ as the double of $(B^4, D_K)$, then we see that the former is fibered with fiber $M_{2g}=\#_{2g}S^1\times S^2$ (since $M_{2g}=\mathcal DH_{2g}$) and monodromy $\Phi=\mathcal D\phi$. This spinning construction provides a nice set of examples  to apply the techniques and results from the previous sections.

The trivial pair $(S^4,S^2)$ admits a natural fibration by 3--balls.  One way to visualize the spin $\mathcal S(K)$ is to view $K$ as a knotted arc $K^\circ\subset B^3$, and to identify this 3--ball with a fiber of $(S^4, S^2)$.  Then, $\mathcal S(K)$ is the trace of $K^\circ$ as the 3--ball sweeps out all of $S^4$ via this fibration.  See Figure \ref{fig:spun_arcs}(a).

\begin{figure}
\centering
\includegraphics[scale = .15]{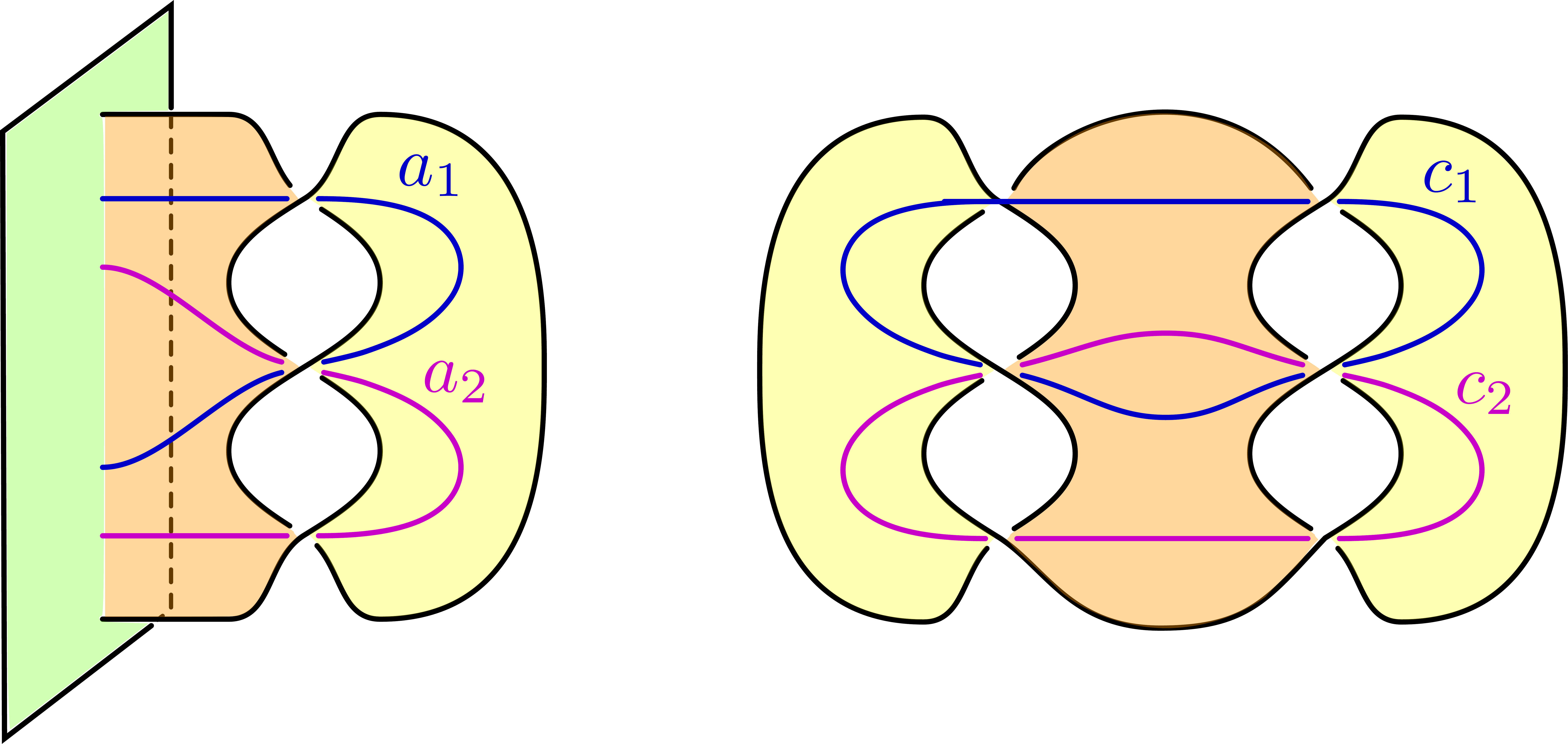}
\put(-298,0){\large(a)}
\put(-102,0){\large(b)}
\caption{The right handed trefoil, shown (a) as an embedded arc in $B^3$. (A portion of the boundary 2--sphere is shown in green.)  Lying on a Seifert surface for $K^\circ$, we see two arcs, $a_1$ and $a_2$, which are boundary parallel in $B^3$.  If we form the spin of the the picture in (a), we get the spun trefoil knot, together with two spheres $S_1$ and $S_2$ corresponding to the spins of the arcs $a_1$ and $a_2$.  In (b), we see an equatorial cross section of the spun trefoil.  Alternatively, (b) shows the boundary of the half-spun trefoil.  The curves $c_1$ and $c_2$ are Stallings curves for the square knot, and bound disks in the genus two handlebody, thought of as a fiber of the half-spun trefoil.}
\label{fig:spun_arcs}
\end{figure}

Let $K$ be a fibered knot, and let $F$ be a fiber of $K^\circ\subset B^3$.  Let $a$ be an unknotted arc on $F$, with endpoints in $\partial B^3$, as in Figure \ref{fig:spun_arcs}(a).
Then $D_a = a \times I \subset (\overline{B^3 \setminus\nu K}) \times I$ is an unknotted disk in a fiber $H_{2g}$ of $D_K$. We can apply Theorem \ref{thm:fibered_disk_knot} to obtain new fibered disks $D_{J_m}$ in $B^4$ by twisting $m$ times along $D_a$. The disks $D_{J_m}$ will be homotopy-ribbon disks for knots $J_m$ obtained from $K \# (-K)$ by doing $m$ Stallings twists along $c=\partial D_a$ (see also Figure \ref{fig:Stallings_example}).

Choosing different arcs for $a$ provides a wealth of examples.  Some of these families of fibered ribbon knots have been studied elsewhere \cite{ait,ait-silver,hitt-silver,stallings:constructions}, often with the goal of finding infinitely many distinct fibered ribbon knots with, say, the same Alexander module.

When we double this picture to get $(S^4, \mathcal S(K))$, we get an unknotted sphere $S_a = \mathcal{D}D_a$ inside a fiber $M\cong M_{2g}$ of $\mathcal S (K)$. We see that $S_a$ is the spin of $a$, and twisting along $S_a$ gives a new fibered ribbon 2--knot $\mathcal S(K)'$ in $S^4$, which is the double of the disk-knot $D_{J_m}$, for all odd $m$ (Theorem \ref{thm:fibered_disk_knot}). However, in this special setting, we see that twisting on $S_a$ preserves the 2--knot $\mathcal S(K)$.

\begin{proposition}
$\mathcal S(K)' = \mathcal S(K)$
\end{proposition}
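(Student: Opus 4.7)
The plan is to show that twisting along $S_a$ corresponds, after doubling, to a Gluck twist on an unknotted sphere bounding a ball disjoint from $\mathcal S(K)$, and is therefore trivial on the pair $(S^4, \mathcal S(K))$.

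First, by Proposition \ref{prop:sphere_twist} applied to the fibered exterior $E_{\mathcal S(K)} = M_{2g}^\circ \times_\Phi S^1$, the monodromy change $\Phi \mapsto \Phi \circ \tau_{S_a}$ replaces $E_{\mathcal S(K)}$ by its Gluck twist along $S_a$. Since $S_a$ lies in the interior of $E_{\mathcal S(K)}$ and the tubular neighborhood $\nu \mathcal S(K) \cong S^2 \times D^2$ is glued back the same way on either side, this Gluck twist extends to a Gluck twist on $S_a \subset S^4$ taking the pair $(S^4, \mathcal S(K))$ to $(S^4, \mathcal S(K)')$.

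Next, I would exhibit a 3--ball bounded by $S_a$ inside $S^4 \setminus \mathcal S(K)$. In the spinning description, $(S^4, \mathcal S(K)) = \mathcal D(B^4, D_K)$ with $(B^4, D_K) = (B^3, K^\circ) \times I$ and $S_a = \mathcal D D_a$ for $D_a = a \times I$. Because $a$ is boundary-parallel in $B^3$ via an isotopy disjoint from $K^\circ$ (the sense of unknottedness indicated in Figure \ref{fig:spun_arcs}), there is a disk $\Delta \subset B^3 \setminus K^\circ$ with $\partial \Delta = a \cup a'$ for some arc $a' \subset \partial B^3$. Then $\Delta \times I \subset B^4 \setminus D_K$ is a 3--ball whose only face in the interior of $B^4$ is $D_a$, with the remaining faces lying in $\partial B^4$; doubling along $\partial B^4$ yields a 3--ball $\tilde B = \mathcal D(\Delta \times I) \subset S^4 \setminus \mathcal S(K)$ with $\partial \tilde B = S_a$.

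Finally, I would localize the argument of Lemma \ref{lemma:trivial_gluck}: take a 4--ball neighborhood $N$ of $\tilde B$ that lies in $S^4 \setminus \mathcal S(K)$. Inside $N$, the sphere $S_a$ is unknotted (it bounds $\tilde B$), so the Gluck twist on $S_a$ produces a manifold diffeomorphic to $N$. Since any orientation-preserving self-diffeomorphism of $\partial N \cong S^3$ is isotopic to the identity, this identification of $N$ with its Gluck-twisted version can be taken to be the identity on $\partial N$, and then extended by the identity to a self-diffeomorphism of $S^4$ supported away from $\mathcal S(K)$ that carries $\mathcal S(K)'$ back to $\mathcal S(K)$. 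The main obstacle is the middle step: pinning down that the bounding 3--ball for $S_a$ can be chosen disjoint from $\mathcal S(K)$, which requires interpreting ``unknotted arc'' in the setup as boundary-parallelism in the pair $(B^3, K^\circ)$ rather than merely in $B^3$; once this is secured, the remaining steps follow formally from results already established in the paper.
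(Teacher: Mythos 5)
The proposal has a genuine gap at the step you yourself flag as ``the main obstacle,'' and it does not in fact close it. You assume that the arc $a$ is boundary-parallel in the pair $(B^3, K^\circ)$, i.e.\ that there is a half-disk $\Delta \subset B^3 \setminus K^\circ$ with $\partial\Delta = a \cup a'$. But the paper's hypothesis is only that $a$ is unknotted in $B^3$ (boundary-parallel in $B^3$, as the caption to Figure~\ref{fig:spun_arcs} makes explicit); the half-disk for $a$ generically does meet $K^\circ$, since $a$ lies on the fiber surface $F$ and $K^\circ = \partial F \cap \mathrm{int}\,B^3$. If $a$ were always boundary-parallel rel $K^\circ$, then the Stallings curve $c = \partial D_a$ would bound a disk in the complement of $K\#(-K)$ disjoint from the fiber, and the Stallings twist along $c$ would not change the boundary knot at all --- which would contradict Theorem~\ref{thm:fibered_disk_knot}(2), where twisting along $D_a$ is used precisely to produce infinitely many distinct disk-knots $D_{J_m}$. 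So the disjointness you want is not available.

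The paper's actual proof takes the opposite tack: it allows the half-disk to meet $K^\circ$ transversely in $k$ points, so the spun ball $B_a$ meets $\mathcal S(K)$ in $k$ circles. Crucially, these circles are the spins of isolated points and therefore form a $k$--component unlink, which (after an isotopy in a collar) can be placed concentrically in a plane inside $B_a$. The ``untwisting'' diffeomorphism from Lemma~\ref{lemma:trivial_gluck} is a $2\pi$ rotation of $B^3\times S^1$ along $B_a$, and by choosing the rotation axis perpendicular to that plane one sees that it preserves each of those circles setwise, hence carries $\mathcal S(K)'$ onto $\mathcal S(K)$. Your first two steps (Proposition~\ref{prop:sphere_twist} turns the monodromy twist into a Gluck twist on $S_a$, and the doubled half-disk gives a ball $B_a$ showing $S_a$ is unknotted) agree with the paper; the missing idea is that one must control the interaction of $B_a$ with $\mathcal S(K)$ rather than avoid it, and the control comes from the special unlinked, coplanar structure of the intersection circles produced by spinning.
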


\begin{proof}
Since $a$ is unknotted it bounds a semi-disk in $B^3$ (that is, $a$ together with an arc on the boundary bound a disk). Perturb the disk so that it intersects $K^\circ$ transversely in $k$ points. Spinning this semi-disk gives a ball $B_a$ bounded by $S_a$, showing that $S_a$ is, in fact, unknotted in $S^4$. Furthermore, we see that $B_a$ intersects $\mathcal S(K)$ in $k$ circles, which come from spinning the $k$ intersection points. The circles form a $k$ component unlink since they are the spins of isolated points. By an isotopy of $\mathcal S(K)$ taking place in a collar $B_a \times I$, we can assume the circles all lie concentrically in a plane in $B_a$. 

Now, $\mathcal S(K)'$ is formed by changing the monodromy by twisting along $S_a$, and, by Proposition \ref{prop:sphere_twist}, this is equivalent to performing a Gluck twist on $S_a$. By the proof of Lemma \ref{lemma:trivial_gluck}, we see that the diffeomorphism taking $S^4_{S_a}$ (the result of performing a Gluck twist on $S_a$) back to $S^4$ is ``untwisting" along the complement of $S_a$. This can be thought of as cutting $S^1 \times B^3 = \overline{S^4 \setminus \nu S_a}$ at $B_a$ and re-gluing by a $2\pi$ twist about an axis, which we can choose to be perpendicular to the plane in $B_a$ containing the concentric circles. Therefore the twist in fact preserves the circles, and hence sends $\mathcal S(K)'$ onto $\mathcal S(K)$, proving the claim.
\end{proof}

\subsection{Torus surgery}\label{torus}

We finish by discussing one more type of monodromy change. Just as one can change the monodromy of a fibered 2--knot by twisting along a sphere, one can also change the monodromy by twisting along a torus. This can be interpreted as a torus surgery in the total space, and there are some nice applications of this for spun knots. 

\begin{definition}
Given a torus $T$ embedded in compact 3--manifold $M$, we can identify a neighborhood of $T$ with $S^1 \times S^1 \times I$. We define the map $\tau_T :M \to M$ to be the identity map away from a neighborhood of $T$, and the identity map on the first $S^1$ factor times a Dehn twist on $S^1 \times I$ in the neighborhood $S^1 \times S^1 \times I$. We will call this map \emph{twisting along the torus} $T$.
\end{definition}

Given an embedded torus $T$ in a 4-manifold $X$, \emph{torus surgery} on $T$ (also called a \emph{log transform}) is the process of cutting out $\nu T$ and re-gluing it by a self-diffeomorphism of $\partial \nu T = T^3$. Torus surgery is a well-studied operation on 4-manifolds, and we refer the reader to \cite{gompf-stipsicz} for a more comprehensive overview. Here we state its relationship to changing the monodromy by twisting along a torus (see \cite{gompf} and \cite{nash}) . The proof is similar to Proposition \ref{prop:sphere_twist} and Proposition \ref{prop:disk_twist}.

\begin{proposition} \label{prop:torus_twist}
Let $Y$ be a compact 3--manifold, and consider the mapping torus $X = Y \times_\phi S^1$. Let $T$ be an embedded torus in a fiber $M_*$. Let $X' = Y\times_{\phi \circ \tau_T} S^1$ be the mapping torus formed by cutting $X$ open along $M_*$ and re-gluing using the diffeomorphism $\tau_T:M_*\to M_*$. Then $X'$ can be obtained from $X$ by performing a torus surgery on $T$.
\end{proposition}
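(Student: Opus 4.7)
The plan is to follow the template established in the proofs of Propositions \ref{prop:sphere_twist} and \ref{prop:disk_twist}, exploiting the analogy between the Gluck twist along a sphere and a torus surgery along a torus as instances of a ``cut and re-glue'' operation inside a local product neighborhood.

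First, I would set up a convenient local model. Since $T$ is an orientable surface embedded in the orientable 3--manifold $Y$, its normal bundle in $M_*$ is trivial, so a tubular neighborhood of $T$ in $M_*$ can be identified with $T\times I$. Near $M_*$ the mapping torus $X$ is a product $M_*\times I_0$, where $I_0$ is a small interval of the base circle. Combining these, I obtain a tubular neighborhood $N\cong T\times I\times I_0\cong T\times D^2$ of $T$ in $X$, with $\partial N\cong T\times\partial D^2\cong T^3$. By definition, a torus surgery on $T$ amounts to removing $N$ and re-attaching it via some self-diffeomorphism $\psi$ of $T^3$, so the task is to exhibit the specific $\psi$ that realizes the monodromy change.

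Writing $T=S^1_x\times S^1_\theta$ and parametrizing $I$ by $s$, I would define $\psi$ to be the identity on all of $\partial N$ except on the slab $T\times I\times\{1\}$, where I set
\[
\psi(x,\theta,s)\;=\;(x,\,\theta+2\pi s,\,s).
\]
This is a well-defined diffeomorphism of $T^3$ because the parametrized Dehn twist is the identity at $s=0$ and $s=1$, so $\psi$ matches the identity on the adjacent faces of $\partial N$. This is the direct analog of the choice of $\rho$ in Remark \ref{rmk:gluck}, adapted from the sphere to the torus setting.

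Finally, I would verify that the surgered 4--manifold equals $X'$. With this choice of $\psi$, every non-trivial identification is concentrated on $T\times I\times\{1\}$, so the surgery is manifestly diffeomorphic to cutting $X$ along the fiber $M_*$ at the top face of $I_0$ and re-gluing by the map that is the identity outside $T\times I$ and $(x,\theta,s)\mapsto(x,\theta+2\pi s,s)$ on $T\times I$. By definition this map is precisely $\tau_T$, so the surgered manifold is $Y\times_{\phi\circ\tau_T}S^1=X'$. The argument is a direct parallel of the proofs of Propositions \ref{prop:sphere_twist} and \ref{prop:disk_twist}, and I do not anticipate a substantive obstacle; the only point that requires care is arranging the local model so that the freedom in the torus surgery gluing accommodates exactly the twist $\tau_T$, which the isotopy-style reduction above accomplishes.
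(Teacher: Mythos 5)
Your proof is correct and follows exactly the template the paper has in mind: the paper itself only remarks that the proof is ``similar to Proposition \ref{prop:sphere_twist} and Proposition \ref{prop:disk_twist},'' and your argument is a faithful elaboration of that template, identifying $\nu T\cong T\times I\times I_0$, concentrating the gluing map on the face $T\times I\times\{1\}$, and matching it with $\tau_T$ there. The checks you add (triviality of the normal bundle, that the parametrized twist closes up to the identity at $s=0,1$) are the right details to supply.
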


We can show that all fibered 2--knots coming from spinning fibered 1-knots are related by torus surgeries. In this special setting, this can be thought of as an analogue to a theorem of Harer \cite{harer}, which states that all fibered links in $S^3$ are related to the unknot by Stallings twists and a type of plumbing.  

\begin{reptheorem}{thm:torus_surgery}

Suppose $\mathcal S_1$ and $\mathcal S_2$ are 2--knots in $S^4$ obtained by spinning fibered 1--knots of genus $g_1$ and $g_2$ respectively. If $g_1 = g_2$, then $(S^4, \mathcal S_2)$ can be obtained from $(S^4, \mathcal S_1)$ by surgery on a link of tori in the exterior of $\mathcal S_1$. If $g_1 \neq g_2$, we can get from $(S^4, \mathcal S_1)$ to $(S^4, \mathcal S_2)$ by a sequence of \emph{two} sets of surgeries on links of tori.
\end{reptheorem}

\begin{proof}
Assume $\mathcal S_1$ and $\mathcal S_2$ are the result of spinning fibered 1-knots $K_1$ and $K_2$, respectively, each with fiber genus $g = g_1 = g_2$. Let us first restrict our attention to $\mathcal S_1$ and $K_1$. The monodromy for $K_1$ is given by a composition of Dehn twists along some collections of curves $\{\alpha_i\}$ on its fiber $\Sigma_g^\circ$, and so the monodromy for the fibered disk $D_{K_1}$ is a composition of Dehn twists times the identity on its fiber $\Sigma_g^\circ \times I = H_{2g}$. Therefore, we see that the monodromy for $D_{K_1}$ is a composition of maps that are twisting along annuli $\{\alpha_i \times I\}$, and when we double to obtain $\mathcal S_1$, we see that the monodromy of the fibered ribbon 2--knot is a composition of maps that are twisting along tori $\{\alpha_i \times S^1\} \subset E_{\mathcal S_1}$. The same is true, of course, for $\mathcal S_2$, and we can achieve that the collections of tori are disjoint by selecting them to lie in disjoint fibers. Then, $(S^4, \mathcal S_2)$ can be obtained from $(S^4, \mathcal S_1)$ by cutting along these fibers and changing the monodromy by twisting along these collections of tori. By Proposition \ref{prop:torus_twist}, this can be realized by torus surgery on the tori, proving the first part of the theorem.

Now suppose $g_2 > g_1$. Then $\mathcal S_1$ is fibered by $M_{2g_1}$ and $\mathcal S_2$ is fibered by $M_{2g_2}$. The first task is to increase the number of $S^1 \times S^2$ factors in the fiber of $\mathcal S_1$ using torus surgery.  Isotope the monodromy of $\mathcal S_1$ so that it fixes a 3--ball $B$ in $M_{2g_1}$. Then, we have $B \times S^1$ inside the exterior $E_{ \mathcal S_1}$. If $U \subset B$ is the $2(g_2 - g_1)$ component unlink, then $U \times S^1 \subset B \times S^1$ is an unlink of $2(g_2 - g_1)$ tori in $E_{\mathcal S_1}$.  We perform surgery on these tori such that the gluing maps correspond to the gluing map for 0--framed Dehn surgery on $U$ times the identity map in the remaining $S^1$ direction. This changes $B \times S^1$ to $ M_{2(g_2-g_1)}^\circ \times S^1$. The result of this set of torus surgeries is a mapping torus with fiber $M_{2g_2}$. We can now change the monodromy to that of $\mathcal S_2$ by another set of torus surgeries as in the first part of the proof. Hence $(S^4, \mathcal S_2)$ can be obtained from $(S^4, \mathcal S_1)$ by a sequence of two sets of surgeries on links of tori.
\end{proof}

Suppose $c$ is a Stallings curve in a fiber $F_*$ of $K^\circ$ in $\overline{B^3 \setminus \nu K^\circ}$. Then spinning $c$ gives a torus $T_c$ inside a fiber $M_*$ of $\mathcal S(K)$. We have the following relationship.

\begin{corollary}
If $K_c$ is the result of applying a Stallings twist to $K$ along $c$, then $\mathcal S(K_c)$ can be obtained from $\mathcal S(K)$ by torus surgery on $T_c$.
\end{corollary}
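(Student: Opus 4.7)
The plan is to trace the Stallings twist through the spin construction at the level of fiber bundles and then apply Proposition \ref{prop:torus_twist}. View $(S^4,\mathcal{S}(K)) = \mathcal{D}(B^4,D_K)$ as the double of the half-spin of $K$. Then the exterior of $\mathcal{S}(K)$ is a mapping torus with fiber $M_{2g} = \mathcal{D}H_{2g}$ and monodromy $\Phi = \mathcal{D}(\varphi \times \id)$, where $\varphi$ is the monodromy of $K$ on $\Sigma_g^\circ$ and we identify $H_{2g}$ with $\Sigma_g^\circ \times I$, as in the discussion preceding Theorem \ref{thm:torus_surgery}.

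Applying a Stallings twist to $K$ along $c$ replaces $\varphi$ by $\varphi \circ \tau_c$, so the half-spin $D_{K_c}$ has monodromy $(\varphi \circ \tau_c) \times \id = (\varphi \times \id) \circ (\tau_c \times \id)$ on $H_{2g}$, and consequently $\mathcal{S}(K_c)$ has monodromy $\Phi' = \Phi \circ \mathcal{D}(\tau_c \times \id)$ on $M_{2g}$. The map $\tau_c \times \id$ is supported in a tubular neighborhood of the properly embedded annulus $A = c \times I \subset H_{2g}$ and is a Dehn twist along $A$ in the local-model sense of Section \ref{section:mono}. Doubling $H_{2g}$ along its boundary sends $A$ to a torus $\mathcal{D}A \subset M_{2g}$ which, by construction of the spinning, is exactly the spin $T_c$ of $c$. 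Moreover, the double of the annular Dehn twist is isotopic to the toroidal twist $\tau_{T_c}$. Hence $\Phi' = \Phi \circ \tau_{T_c}$, with $T_c$ lying in a fiber of $\mathcal{S}(K)$.

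The conclusion is then an appeal to Proposition \ref{prop:torus_twist}: the exteriors $E_{\mathcal{S}(K)}$ and $E_{\mathcal{S}(K_c)}$ are mapping tori whose monodromies differ by $\tau_{T_c}$ along the torus $T_c$ in a fiber, so they are related by a torus surgery on $T_c$. Because this surgery takes place inside $E_{\mathcal{S}(K)}$ and does not touch a neighborhood of $\mathcal{S}(K)$, it descends to a torus surgery on $T_c$ in $S^4$, and the result is the ambient pair $(S^4,\mathcal{S}(K_c))$. Any potential ambiguity in the re-gluing of $S^2 \times D^2$ is absorbed by Remark \ref{rmk:trivial_gluck}, since spun 2--knots bound copies of $M_{2g}^\circ$ and hence are unaffected by a Gluck twist.

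The main obstacle is verifying the claim that the double of the Dehn twist along the annulus $A$ is the Dehn twist along the spin torus $T_c$. This is essentially a local-model computation: the neighborhood of $A$ in $H_{2g}$ is $S^1 \times I \times I$ and doubles to the neighborhood $S^1 \times S^1 \times I$ of $T_c$ in $M_{2g}$, and the annular twist (a Dehn twist in the first $S^1 \times I$ factor, identity in the second $I$ factor) doubles to the toroidal twist of Section \ref{torus}. Once this compatibility is checked, the rest of the argument is bookkeeping.
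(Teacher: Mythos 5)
Your argument is correct and follows the paper's intended approach (the corollary is stated without a separate proof because it specializes the computation in the proof of Theorem \ref{thm:torus_surgery}): a Dehn twist $\tau_c$ on the fiber $\Sigma_g^\circ$ of $K$ becomes the annular twist $\tau_c \times \id$ on $H_{2g} \cong \Sigma_g^\circ \times I$, which doubles to a toroidal twist along $T_c$, and Proposition \ref{prop:torus_twist} converts this to a torus surgery. Your local-model verification that the double of the annular twist is $\tau_{T_c}$, and your remark on the re-gluing of $S^2 \times D^2$, simply fill in details the paper leaves implicit.
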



\bibliographystyle{amsalpha}
\bibliography{DiskKnotBiblio_Journal.bib}

\providecommand{\bysame}{\leavevmode\hbox to3em{\hrulefill}\thinspace}
\providecommand{\MR}{\relax\ifhmode\unskip\space\fi MR }
\providecommand{\MRhref}[2]{%
  \href{http://www.ams.org/mathscinet-getitem?mr=#1}{#2}
}
\providecommand{\href}[2]{#2}
\begin{thebibliography}{Gom10}

\bibitem[Ait84]{ait}
Iain~Roderick Aitchison, \emph{Isotoping and twisting knots and ribbons},
  ProQuest LLC, Ann Arbor, MI, 1984, Thesis (Ph.D.)--University of California,
  Berkeley.

\bibitem[AR84]{aitchison-rubinstein}
I.~R. Aitchison and J.~H. Rubinstein, \emph{Fibered knots and involutions on
  homotopy spheres}, Four-manifold theory ({D}urham, {N}.{H}., 1982), Contemp.
  Math., vol.~35, Amer. Math. Soc., Providence, RI, 1984, pp.~1--74.

\bibitem[AS88]{ait-silver}
Iain~R. Aitchison and Daniel~S. Silver, \emph{On certain fibred ribbon disc
  pairs}, Trans. Amer. Math. Soc. \textbf{306} (1988), no.~2, 529--551.

\bibitem[AT13]{AT}
T.~{Abe} and M.~{Tange}, \emph{{A construction of slice knots via annulus
  twists}}, ArXiv e-prints (2013).

\bibitem[BZ85]{burde-zieschang}
Gerhard Burde and Heiner Zieschang, \emph{Knots}, de Gruyter Studies in
  Mathematics, vol.~5, Walter de Gruyter \& Co., Berlin, 1985.

\bibitem[CG83]{casson-gordon:fibered_ribbon}
A.~J. Casson and C.~McA. Gordon, \emph{A loop theorem for duality spaces and
  fibred ribbon knots}, Invent. Math. \textbf{74} (1983), no.~1, 119--137.

\bibitem[Coc83]{cochran:ribbon_knots}
Tim Cochran, \emph{Ribbon knots in {$S^{4}$}}, J. London Math. Soc. (2)
  \textbf{28} (1983), no.~3, 563--576.

\bibitem[Glu62]{gluck:two-spheres}
Herman Gluck, \emph{The embedding of two-spheres in the four-sphere}, Trans.
  Amer. Math. Soc. \textbf{104} (1962), 308--333.

\bibitem[Gom10]{gompf}
Robert~E. Gompf, \emph{More {C}appell-{S}haneson spheres are standard}, Algebr.
  Geom. Topol. \textbf{10} (2010), no.~3, 1665--1681.

\bibitem[GS75]{gordon-sumners}
C.~McA. Gordon and D.~W. Sumners, \emph{Knotted ball pairs whose product with
  an interval is unknotted}, Math. Ann. \textbf{217} (1975), no.~1, 47--52.

\bibitem[GS99]{gompf-stipsicz}
Robert~E. Gompf and Andr{\'a}s~I. Stipsicz, \emph{{$4$}-manifolds and {K}irby
  calculus}, Graduate Studies in Mathematics, vol.~20, American Mathematical
  Society, Providence, RI, 1999.

\bibitem[GST10]{GST}
Robert~E. Gompf, Martin Scharlemann, and Abigail Thompson, \emph{Fibered knots
  and potential counterexamples to the property 2{R} and slice-ribbon
  conjectures}, Geom. Topol. \textbf{14} (2010), no.~4, 2305--2347. \MR{2740649
  (2012c:57012)}

\bibitem[Har82]{harer}
John Harer, \emph{How to construct all fibered knots and links}, Topology
  \textbf{21} (1982), no.~3, 263--280.

\bibitem[HS91]{hitt-silver}
L.~Richard Hitt and Daniel~S. Silver, \emph{Ribbon knot families via
  {S}tallings' twists}, J. Austral. Math. Soc. Ser. A \textbf{50} (1991),
  no.~3, 356--372.

\bibitem[Kaw96]{kawauchi}
Akio Kawauchi, \emph{A survey of knot theory}, Birkh\"auser Verlag, Basel,
  1996, Translated and revised from the 1990 Japanese original by the author.

\bibitem[Ker65]{kervaire}
Michel~A. Kervaire, \emph{Les n\oe uds de dimensions sup\'erieures}, Bull. Soc.
  Math. France \textbf{93} (1965), 225--271.

\bibitem[KO01]{korkmaz-ozbagci}
Mustafa Korkmaz and Burak Ozbagci, \emph{Minimal number of singular fibers in a
  {L}efschetz fibration}, Proc. Amer. Math. Soc. \textbf{129} (2001), no.~5,
  1545--1549.

\bibitem[Kor04]{korkmaz}
Mustafa Korkmaz, \emph{Stable commutator length of a {D}ehn twist}, Michigan
  Math. J. \textbf{52} (2004), no.~1, 23--31.

\bibitem[Lev83]{levine:disk_knots}
J.~Levine, \emph{Doubly sliced knots and doubled disk knots}, Michigan Math. J.
  \textbf{30} (1983), no.~2, 249--256.

\bibitem[Lon90]{long}
D.~D. Long, \emph{On pseudo-{A}nosov maps which extend over two handlebodies},
  Proc. Edinburgh Math. Soc. (2) \textbf{33} (1990), no.~2, 181--190.

\bibitem[Mon79]{montesinos}
Jos{\'e}~Mar{\'{\i}}a Montesinos, \emph{Heegaard diagrams for closed
  {$4$}-manifolds}, Geometric topology ({P}roc. {G}eorgia {T}opology {C}onf.,
  {A}thens, {G}a., 1977), Academic Press, New York-London, 1979, pp.~219--237.

\bibitem[Nas10]{nash}
Daniel Nash, \emph{Homotopy 4-spheres and surgery on 2-tori}, ProQuest LLC, Ann
  Arbor, MI, 2010, Thesis (Ph.D.)--University of California, Irvine.

\bibitem[Neu63]{neuwirth}
Lee Neuwirth, \emph{On {S}tallings fibrations}, Proc. Amer. Math. Soc.
  \textbf{14} (1963), 380--381.

\bibitem[Pap57]{papa}
C.~D. Papakyriakopoulos, \emph{On {D}ehn's lemma and the asphericity of knots},
  Ann. of Math. (2) \textbf{66} (1957), 1--26.

\bibitem[Sei33]{seifert}
H.~Seifert, \emph{Topologie {D}reidimensionaler {G}efaserter {R}\"aume}, Acta
  Math. \textbf{60} (1933), no.~1, 147--238.

\bibitem[Sta62]{stallings:group}
John Stallings, \emph{On fibering certain {$3$}-manifolds}, Topology of
  3-manifolds and related topics ({P}roc. {T}he {U}niv. of {G}eorgia
  {I}nstitute, 1961), Prentice-Hall, Englewood Cliffs, N.J., 1962, pp.~95--100.
  \MR{0158375 (28 \#1600)}

\bibitem[Sta78]{stallings:constructions}
John~R. Stallings, \emph{Constructions of fibred knots and links}, Algebraic
  and geometric topology ({P}roc. {S}ympos. {P}ure {M}ath., {S}tanford {U}niv.,
  {S}tanford, {C}alif., 1976), {P}art 2, Proc. Sympos. Pure Math., XXXII, Amer.
  Math. Soc., Providence, R.I., 1978, pp.~55--60.

\bibitem[Thu97]{thurston}
William~P. Thurston, \emph{Three-dimensional geometry and topology. {V}ol. 1},
  Princeton Mathematical Series, vol.~35, Princeton University Press,
  Princeton, NJ, 1997, Edited by Silvio Levy.

\bibitem[Yaj64]{yajima}
Takeshi Yajima, \emph{On simply knotted spheres in {$R^{4}$}}, Osaka J. Math.
  \textbf{1} (1964), 133--152.

\bibitem[Zee65]{zeeman}
E.~C. Zeeman, \emph{Twisting spun knots}, Trans. Amer. Math. Soc. \textbf{115}
  (1965), 471--495.

\end{thebibliography}

\end{document}